\newtheorem{theorem}{Theorem}[section]
\newtheorem{lemma}{Lemma}[section]
\newtheorem{corollary}{Corollary}[section]
\newtheorem{proposition}{Proposition}[section]
\newtheorem{conjecture}{Conjecture}
\newtheorem{question}{Question}
\newtheorem{remark}{Remark}[section]
\newcommand*\rel@kern[1]{\kern#1\dimexpr\macc@kerna}
\newcommand*\widebar[1]{%
  \begingroup
  \def\mathaccent##1##2{%
    \rel@kern{0.8}%
    \overline{\rel@kern{-0.8}\macc@nucleus\rel@kern{0.2}}%
    \rel@kern{-0.2}%
  }%
  \macc@depth\@ne
  \let\math@bgroup\@empty \let\math@egroup\macc@set@skewchar
  \mathsurround\z@ \frozen@everymath{\mathgroup\macc@group\relax}%
  \macc@set@skewchar\relax
  \let\mathaccentV\macc@nested@a
  \macc@nested@a\relax111{#1}%
  \endgroup
}
\newcounter{theor}
\newtheorem{thm}[theor]{Theorem}
\newtheorem{cor}[theor]{Corollary}
\def\B{\mathbb{B}}
\def\R{\mathbb{R}}
\def\N{\mathbb{N}}
\def\vol{\mathrm{vol}}
\def\esc#1{\left\langle #1\right\rangle}
\newcommand{\dlat}{\mathrm{d} }
\def\esc#1{\left\langle #1\right\rangle}
\def\g{\mathrm{bar}}
\def\cov{\mathrm{Cov}}
\numberwithin{equation}{section}
\begin{document}

\title{Entropy, slicing problem and functional Mahler's conjecture}
\author[M. Fradelizi]{Matthieu Fradelizi}
\email{matthieu.fradelizi@univ-eiffel.fr}
\address{Univ Gustave Eiffel, Univ Paris Est Creteil, CNRS, LAMA UMR8050 F-77447 Marne-la-Vallée, France}

\author[F. Marín Sola]{Francisco Marín Sola}
\email{francisco.marin7@um.es}
\email{francisco.msola@cud.upct.es}
\address{Centro Universitario de la Defensa (CUD). Santiago de la Ribera, Murcia (España)}

\thanks{ The second named author is supported by the grant PID2021-124157NB-I00, funded by MCIN/AEI/10.13039/501100011033/``ERDF A way of making Europe'', as well as by the grant  ``Proyecto financiado por la CARM a través de la convocatoria de Ayudas a proyectos para el desarrollo de investigación científica y técnica por grupos competitivos, incluida en el Programa Regional de Fomento de la Investigación Científica y Técnica (Plan de Actuación 2022) de la Fundación Séneca-Agencia de Ciencia y Tecnología de la Región de Murcia, REF. 21899/PI/22''.}

\subjclass[2010]{Primary 52A38, 52A40, 26B15, 26D15, ; Secondary 52A20}
\keywords{Volume product, Mahler's conjecture, log-concave functions, slicing problem, entropy.}
\maketitle

\begin{abstract}
In a recent work, Bo'az Klartag showed that, given a convex body with minimal volume product, its isotropic constant is related to its volume product. As a consequence, he obtained that a strong version of the slicing conjecture implies Mahler's conjecture. In this work, we extend these geometrical results to the realm of log-concave functions. In this regard, the functional analogues of the projective perturbations of the body are the log-Laplace perturbations of the function. The differentiation along these transformations is simplified thanks to the known properties of the log-Laplace transform. Moreover, we show that achieving such an analogous result requires the consideration of the suitable version of the isotropic constant, notably the one incorporating the entropy. Finally, an investigation into the equivalences between the functional and geometrical strong forms of the slicing conjecture is provided.



\end{abstract}

\section{Introduction}
Let $K \subset \R^n$ be a convex body containing the origin in its interior. The \emph{polar body} of $K$ is defined by $K^{\circ} = \{y\in \R^n : \esc{x,y} \leq 1,\ \forall\  x \in K \}$. It is also a convex body containing the origin in its interior. Denote by $\vol(\cdot)$ the Lebesgue measure on $\R^n$. The \emph{Mahler volume} and the \emph{volume product} of $K$ are defined as
\begin{equation*}
   M(K) = \vol(K)\vol(K^\circ) \quad \hbox{and} \quad P(K) = \min_{z \in K} M(K-z).
\end{equation*}
It is known that the above minimum is attained at a unique point called the \emph{Santal\'o point} of $K$ denoted by $s(K)$. The Blaschke-Santaló inequality states that, for every convex body $K$,
$$
P(K) \leq P(B^n_2),
$$
where $B^n_2$ is the Euclidean unit ball in $\R^n$. The reverse inequality, known as \emph{Mahler's conjecture}, has been verified for different families of convex bodies (see e.g. \cite{FrMeZv} and also \cite[Section~10.7]{Sch2}), but in general it is still open. This conjecture claims that, for every convex body $K$,  
\begin{equation}\label{c:Mahler_Conj}
    P(K) \geq P(\Delta^n) = \frac{(n+1)^{n+1}}{(n!)^2},
\end{equation}
where $\Delta^n$ is a $n$-dimensional regular simplex with barycenter at the origin (see \cite{FrMeZv} for an overview of the rich theory developed to study the volume product). 
Since 
$$
\min_{K} P(K) = \min_{K}P(K-s(K)) = \min_{K}M(K),
$$
the conjecture equivalently postulates that $M(K) \geq M(\Delta_n)$, for every convex body $K$ containing the origin in its interior.
\smallskip

In \cite{Kl}, among other related topics, Klartag studied the relationship of Mahler's conjecture \eqref{c:Mahler_Conj} with the so-called slicing problem (also referred to as the slicing conjecture -see \cite{KlMi} for a survey on the topic). 
We recall the following definitions of the covariance matrix and the isotropic constant of a convex body $K$ in $\R^n$: 
\[
 \cov(K)=\int_K \frac{xx^T\dlat x}{\vol(K)}-\g(K)\g(K)^T\quad\hbox{and}\quad L_K=\frac{(\det\cov(K))^\frac{1}{2n}}{\vol(K)^\frac{1}{n}}.
\]
Recall also that the barycenter of a convex body $K$ is defined by 
\[
\g(K)=\int_K x\dlat x.
\]
The usual slicing conjecture postulates that there is a uniform upper bound of the isotropic constant of any convex body in any dimension, while its strong version asserts that, in any fixed dimension, the simplex maximizes the isotropic constant among convex bodies.
More precisely, using projective perturbations of an extremal body, Klartag proved the following (see also \cite{BaSoTz} for a shorter proof): 
\begin{thm}[Klartag \cite{Kl}]
    Let $K \subset \R^n$ be a convex body which is a local minimizer of the volume product. Then $ \text{\emph{Cov}}(K^{\circ}) \geq (n+2)^{-2}\, \text{\emph{Cov}}(K)^{-1}$ in the sense of symmetric matrices. 
\end{thm}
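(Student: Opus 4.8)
\emph{Proof plan.} The plan is to run the projective-perturbation strategy. First I would normalise $K$: after a translation assume the Santal\'o point of $K$ is the origin, so that $P(K)=M(K)=\vol(K)\vol(K^{\circ})$ and, equivalently, the barycenter of $K^{\circ}$ is the origin. Then I would introduce a two-parameter family of nearby bodies. For $v\in\R^{n}$ in a small neighbourhood of $0$ put $\Phi_{v}(x)=x/(1-\esc{v,x})$ (a projective map, well defined and injective on $K$), set $K_{v}=\Phi_{v}(K)$, and for $z\in\R^{n}$ small consider $K_{v}-z$. Since $K_{v}-z\to K$ in the Hausdorff metric as $(v,z)\to0$, the assumed local minimality gives $P(K_{v}-z)\ge P(K)$; combining with $M(A)\ge P(A)$ and writing $G(v,z):=M(K_{v}-z)$ we get
\[
 G(v,z)=M(K_{v}-z)\ \ge\ P(K_{v}-z)\ \ge\ P(K)=M(K)=G(0,0),
\]
so $(0,0)$ is a local minimum of $G$. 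It then remains to expand $G$ to second order and read off the matrix inequality.

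For this I would use two elementary facts. A Jacobian computation gives $\det\D\Phi_{v}(x)=(1-\esc{v,x})^{-(n+1)}$, whence $\vol(K_{v})=\int_{K}(1-\esc{v,x})^{-(n+1)}\,\dlat x$; and one checks that $(K_{v})^{\circ}=K^{\circ}-v$ and, more generally, $(K_{v}-z)^{\circ}=\Phi_{z}(K^{\circ}-v)$ for small $z$, so that $\vol\big((K_{v}-z)^{\circ}\big)=\int_{K^{\circ}}\big(1-\esc{z,w}+\esc{z,v}\big)^{-(n+1)}\,\dlat w$. Hence
\[
 G(v,z)=\Big(\int_{K}(1-\esc{v,x})^{-(n+1)}\dlat x\Big)\Big(\int_{K^{\circ}}(1-\esc{z,w}+\esc{z,v})^{-(n+1)}\dlat w\Big).
\]

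Next I would Taylor expand, using $(1-t)^{-(n+1)}=1+(n+1)t+\binom{n+2}{2}t^{2}+O(t^{3})$ and differentiating under the integral sign (legitimate for $v,z$ small). Vanishing of $\nabla G(0,0)$ forces $\int_{K}x\,\dlat x=0$ (the barycenter of $K^{\circ}$ already being at $0$), so the barycenter of $K$ is also at the origin — a property of local minimisers recorded along the way. With both barycenters at $0$ the second-order expansion becomes
\[
 G(v,z)=\vol(K)\vol(K^{\circ})\Big[1+\tbinom{n+2}{2}v^{T}\cov(K)v-(n+1)\,v^{T}z+\tbinom{n+2}{2}z^{T}\cov(K^{\circ})z\Big]+O(|(v,z)|^{3}),
\]
the decisive cross term $-(n+1)v^{T}z$ arising from the translation by $-v$ of $K^{\circ}$ sitting inside the map $\Phi_{z}$. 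Non-negativity of this quadratic form on $\R^{n}\times\R^{n}$ amounts to
\[
 \begin{pmatrix}(n+2)\cov(K)&-I\\ -I&(n+2)\cov(K^{\circ})\end{pmatrix}\succeq 0,
\]
and the Schur complement with respect to the invertible block $(n+2)\cov(K)$ yields $(n+2)\cov(K^{\circ})\succeq\frac{1}{n+2}\cov(K)^{-1}$, i.e. $\cov(K^{\circ})\ge(n+2)^{-2}\cov(K)^{-1}$, as desired.

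The step I expect to need the most care is getting the coupling right rather than any single calculation: a projective perturbation of $K$ \emph{alone} only produces the vacuous bound $\cov(K)\succeq0$, and it is precisely the interaction between the deformation of $K$ and the translation of its polar — encoded in the mixed second-order term $-(n+1)v^{T}z$, and in the fact that the off-diagonal Hessian block is a nonzero multiple of the identity — that makes the Schur-complement estimate non-trivial. The subsidiary ingredients (the Jacobian of $\Phi_{v}$, the identity $(K_{v}-z)^{\circ}=\Phi_{z}(K^{\circ}-v)$, the interchange of differentiation and integration for small parameters, and the characterisation of the Santal\'o point via the barycenter of the polar) are routine.
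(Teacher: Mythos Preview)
Your argument is correct and is essentially the projective--perturbation proof of Balacheff--Solanes--Tzanev \cite{BaSoTz}, which the paper cites alongside Klartag's original. Note, however, that the paper itself does \emph{not} prove this geometric statement: it is quoted in the introduction as background, and the paper's own contribution is the functional analogue, Theorem~\ref{t:Cov_Matrix_Ineq}. That proof runs the same two-parameter scheme you describe, with your projective map $\Phi_v$ and polar translation replaced by the log-Laplace perturbation $f_{x,y}(z)=f(z-x)e^{-\langle z,y\rangle}$; your duality identity $(K_v-z)^\circ=\Phi_z(K^\circ-v)$ becomes $(f_{x,y})^\circ(w)=f^\circ(w-y)e^{-\langle x,w-y\rangle}$, and working with $\log M(f_{x,y})=-\langle x,y\rangle+\Lambda_f(y)+\Lambda_{f^\circ}(x)$ makes the Hessian come out directly as $\bigl(\begin{smallmatrix}\cov(f^\circ)&-I\\-I&\cov(f)\end{smallmatrix}\bigr)$, without the combinatorial factors $(n+1)$, $(n+2)$ that in your computation arise from the Jacobian $(1-\langle v,x\rangle)^{-(n+1)}$ of the projective map.
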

Taking determinants and using that, for $\Delta_n$, there is equality in the preceding inequality, it follows that, for $K$ being a local minimizer of the volume product, one has
\[
L_KL_{K^\circ} M(K)^\frac{1}{n}=\left(\det\cov(K)\det\cov(K^\circ)\right)^{1/2n}\ge\frac{1}{n+2}= L_{\Delta_n}L_{\Delta_n^\circ}M(\Delta_n)^\frac{1}{n}.
\]
Consequently, if, for any convex body $K$, $L_{K} \leq L_{\Delta^n}$, then Mahler's conjecture holds. Klartag thus deduced the following corollary.
\begin{cor}[Klartag \cite{Kl}]\label{c:slicing_implies_Mahler}
    The strong version of slicing's  conjecture for convex bodies in dimension $n$ implies Mahler's conjecture in dimension $n$. 
\end{cor}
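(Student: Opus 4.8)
The statement is an immediate consequence of Klartag's Theorem stated above, so the plan is essentially bookkeeping; let me lay it out. Fix the dimension $n$. First I would invoke the (classical) existence of a global minimizer of the volume product: since $P$ is continuous and affine-invariant, a minimizing sequence may be normalized to lie in a fixed compact family of convex bodies (e.g.\ isotropic position) and, by the Blaschke selection theorem, subconverges to a convex body $K_0$ with $P(K_0)=\min_K P(K)$. Translating $K_0$ so that its Santal\'o point is at the origin, we have $M(K_0)=P(K_0)=\min_K M(K)$, hence in particular $K_0$ is a local minimizer of the volume product, and Klartag's Theorem gives
\[
\cov(K_0^{\circ})\ \ge\ (n+2)^{-2}\,\cov(K_0)^{-1}
\]
as symmetric positive definite matrices.

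Next I would take determinants. Because $A\ge B>0$ implies $\det A\ge\det B$ (monotonicity of the determinant on positive definite matrices), the inequality above yields $\det\cov(K_0^{\circ})\ge(n+2)^{-2n}\big(\det\cov(K_0)\big)^{-1}$, that is,
\[
\big(\det\cov(K_0)\,\det\cov(K_0^{\circ})\big)^{1/2n}\ \ge\ \frac{1}{n+2}.
\]
Unwinding the definition of the isotropic constant, the left-hand side is exactly $L_{K_0}L_{K_0^{\circ}}\,M(K_0)^{1/n}$, so $L_{K_0}L_{K_0^{\circ}}\,M(K_0)^{1/n}\ge(n+2)^{-1}$. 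For the regular simplex the inequality of Klartag's Theorem is an equality (the extremal case recalled in the introduction), so the same computation gives $L_{\Delta_n}L_{\Delta_n^{\circ}}\,M(\Delta_n)^{1/n}=(n+2)^{-1}$; moreover $\Delta_n^{\circ}$ is affinely equivalent to a simplex and $L$ is affine-invariant, hence $L_{\Delta_n^{\circ}}=L_{\Delta_n}$.

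Finally I would feed in the hypothesis. The strong slicing conjecture in dimension $n$ says $L_K\le L_{\Delta_n}$ for every convex body $K$; applying it to $K_0$ and to $K_0^{\circ}$ gives $L_{K_0}L_{K_0^{\circ}}\le L_{\Delta_n}^2=L_{\Delta_n}L_{\Delta_n^{\circ}}$. Dividing $L_{K_0}L_{K_0^{\circ}}M(K_0)^{1/n}\ge(n+2)^{-1}$ by $L_{K_0}L_{K_0^{\circ}}>0$ and comparing with the simplex identity,
\[
M(K_0)^{1/n}\ \ge\ \frac{1}{(n+2)\,L_{K_0}L_{K_0^{\circ}}}\ \ge\ \frac{1}{(n+2)\,L_{\Delta_n}L_{\Delta_n^{\circ}}}\ =\ M(\Delta_n)^{1/n}.
\]
Since $K_0$ realizes $\min_K P(K)=\min_K M(K)$, this proves $P(K)\ge P(K_0)=M(K_0)\ge M(\Delta_n)=P(\Delta_n)$ for every convex body $K$, which is Mahler's conjecture in dimension $n$.

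I do not expect a genuine obstacle here: once Klartag's Theorem is granted, the argument is a short chain of determinant and affine-invariance manipulations. The only points that deserve a word of justification are the existence of a volume-product minimizer (so that the \emph{local} statement of Klartag's Theorem can be applied to it, i.e.\ so that Mahler's conjecture really reduces to an assertion about local minimizers) and the verification that the regular simplex saturates that inequality while passing to the polar leaves its isotropic constant unchanged.
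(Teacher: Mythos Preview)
Your proposal is correct and follows essentially the same route as the paper: apply Klartag's covariance inequality to a minimizer, take determinants to obtain $L_{K}L_{K^\circ}M(K)^{1/n}\ge(n+2)^{-1}$ with equality for the simplex, and then invoke the strong slicing bound $L_K\le L_{\Delta_n}$ on both $K$ and $K^\circ$. The paper presents this chain slightly more tersely (it does not spell out the existence of a global minimizer or the affine equivalence $L_{\Delta_n^\circ}=L_{\Delta_n}$), but the argument is the same.
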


\smallskip
In \cite{BaSoTz}, the above results of Klartag were reproved with a simpler proof. 
In this article, we establish the analogous statements for the functional forms of the conjectures: we study the relationship between the functional Mahler conjecture and the functional slicing conjecture for log-concave functions. Let $f:\R^n\to\R_{+}$ be an integrable log-concave function. The polar, the Mahler volume  and the volume product of $f$ are defined as 
\[
f^\circ(y)=\inf_x\frac{e^{-\langle x,y\rangle}}{f(x)},\quad M(f)=\int f\int f^\circ \quad\hbox{and}\quad P(f)=\inf_{z\in\R^n}M(f_z),
\]
where $f_z(x)=f(x-z)$.
The functional form of Mahler's conjecture for log-concave functions postulates that, among log-concave functions $P(f)\ge e^n$, with equality for 
\[
f_0(x)= e^{-\sum_{i=1}^{n} x_i}\chi_{_{[-1,+\infty)^n}}(x).
\]
Equivalently, as it happens in the case of convex bodies, it postulates that $M(f)\ge e^n$.
The barycenter and  covariance matrix of $f$ are 
\[
\g(f)=\frac{\int xf(x)dx}{\int f}
\quad\hbox{and}\quad 
\cov(f)=\frac{\int xx^Tf(x)dx}{\int f}-\g(f)\g(f)^T.
\]
The differential entropy and the varentropy of $f$ are  
\[
h(f)=-\frac{\int f\log(f)}{\int f} \quad\hbox{and}\quad 
V(f)=\frac{\int f\log(f)^2}{\int f} -(h(f))^2.
\]
The suitable (for us) isotropic constant of $f$ is
\[
\widehat{L}_f = \left(\frac{e^{-h(f)}}{\int f}\right)^{1/n}\bigl(\det\cov(f)\bigr)^{1/2n}.
\]
The usual slicing conjecture for log-concave function asks if their isotropic constants are upper bounded by an absolute universal constant, while its stronger form postulates that, in any fixed dimension $n$, for any log-concave function $f$ on $\R^n$, one has $\widehat{L}_f\le \widehat{L}_{f_0}=1/e$. 
Our main result is the following theorem.
\begin{theorem}\label{t:Cov_Matrix_Ineq}
    Let $f:\R^n \to \R_+$ be an integrable log-concave function. If $f$ is a local minimizer of the functional Mahler volume $M(f)$, then 
\begin{enumerate}
    \item $f$ is a critical point so that  
\[
\g(f)=0,\quad \g(f^\circ)=0\quad\hbox{and}\quad  h(f)+h(f^\circ)=n.
\]
    \item $f$ is a local minimizer so that, in the sense of symmetric matrices, 
\[
\emph{Cov}(f^{\circ}) \geq \emph{Cov}(f)^{-1} \quad\hbox{and}\quad V(f)+V(f^\circ)\ge n.
\]
\end{enumerate}
\end{theorem}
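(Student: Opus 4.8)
The plan is to restrict the functional $M$ to two explicit finite-dimensional families of integrable log-concave perturbations of $f$ along which $M$ can be evaluated in closed form, and then to read off the first- and second-order optimality conditions at the base point. Throughout I write $f=e^{-\phi}$ with $\phi$ convex and recall that $f^\circ=e^{-\phi^*}$, where $\phi^*$ is the Legendre transform.

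First I would treat the \emph{log-Laplace/translation family}: for $(y,z)\in\R^n\times\R^n$ near $0$, set $F_{y,z}(x)=e^{\langle x,y\rangle}f(x-z)$, which is log-concave. A change of variables in the definition of the polar gives $F_{y,z}^\circ(w)=e^{-\langle z,w+y\rangle}f^\circ(w+y)$, and hence
\[
M(F_{y,z})=e^{\Phi(y,z)}M(f),\qquad \Phi(y,z):=\langle z,y\rangle+\Lambda_f(y)+\Lambda_{f^\circ}(-z),
\]
where $\Lambda_g(u)=\log\bigl(\int e^{\langle x,u\rangle}g(x)\dlat x\big/\int g\bigr)$ is the normalised log-Laplace transform. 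Since $f$ is a local minimiser of $M$, the smooth function $\Phi$ has a local minimum at $(0,0)$. Using the classical identities $\nabla\Lambda_g(0)=\g(g)$ and $\nabla^2\Lambda_g(0)=\cov(g)$, the vanishing of $\nabla\Phi(0,0)$ forces $\g(f)=0$ and $\g(f^\circ)=0$; with these the Hessian of $\Phi$ at $(0,0)$ is the block matrix $\left(\begin{smallmatrix}\cov(f)&I_n\\ I_n&\cov(f^\circ)\end{smallmatrix}\right)$, which must be positive semidefinite, and since $\cov(f)\succ 0$ its Schur complement yields $\cov(f^\circ)\succeq\cov(f)^{-1}$.

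Next I would handle the \emph{power family}: for $t$ near $1$ the function $f^t=e^{-t\phi}$ is log-concave, $(f^t)^\circ=e^{-(t\phi)^*}$, and since $(t\phi)^*(w)=t\phi^*(w/t)$ the substitution $w=tv$ gives $M(f^t)=t^n\bigl(\int f^t\bigr)\bigl(\int (f^\circ)^t\bigr)$. Hence $g(t):=\log M(f^t)=n\log t+\log\int e^{-t\phi}+\log\int e^{-t\phi^*}$ has a local minimum at $t=1$. Differentiating under the integral sign and using $h(f)=\int f\phi/\int f$, one finds $g'(1)=n-h(f)-h(f^\circ)$, so $g'(1)=0$ gives $h(f)+h(f^\circ)=n$, which completes (1). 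Differentiating once more, $g''(1)=-n+V(f)+V(f^\circ)$, because the second derivative of $t\mapsto\log\int e^{-t\phi}$ at $t=1$ is the variance of $\phi$ under the probability measure proportional to $f$, which is exactly $V(f)$ by definition (and likewise for $\phi^*$ and $f^\circ$); then $g''(1)\ge 0$ gives $V(f)+V(f^\circ)\ge n$, completing (2).

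The main obstacle is not the algebra but the analytic bookkeeping: one must check that both families stay inside the class of integrable log-concave functions for parameters near the base point (this uses that an integrable log-concave $f$ is bounded and that $-\log f$ grows at least linearly, so $\Lambda_f$ is finite near $0$ and $\int e^{-t\phi}<\infty$ for $t$ near $1$), that these families converge to $f$ in the topology with respect to which $f$ is assumed locally minimal, and that the differentiations under the integral sign — particularly the second derivative along the power family, whose integrands involve $\phi^2e^{-t\phi}$ — are justified by the standard moment bounds for log-concave densities. I expect this last second-order step, identifying $g''(1)$ with $V(f)+V(f^\circ)-n$ and controlling the relevant integrals uniformly near $t=1$, to be the delicate point; the remaining computations are direct.
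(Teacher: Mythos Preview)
Your proposal is correct and follows essentially the same approach as the paper: the same two families (translation/exponential tilt and the power family $f^t$) are used, the functional $M$ is expressed along each via the log-Laplace transform and the identity $M(f^t)=t^n\int f^t\int(f^\circ)^t$, and the first- and second-order conditions at the base point are read off exactly as you describe. The only differences are cosmetic sign and normalisation conventions (the paper uses $e^{-\langle\cdot,\cdot\rangle}$ in its log-Laplace, so its off-diagonal block is $-I$ rather than $I$), and the paper does not spell out the analytic caveats about integrability and differentiation under the integral sign that you flag.
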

As for the case of convex bodies, if $f$ is a local minimizer of the functional Mahler volume, taking determinants of the previous inequality, we get that 
\[
\widehat{L}_f\widehat{L}_{f^\circ}e^\frac{h(f)+h(f^\circ)}{n}M(f)^\frac{1}{n}=\bigl(\det\cov(f)\det\cov(f^\circ)\bigr)^{1/2n}\ge1.
\]
Using that $h(f)+h(f^\circ)=n$, we deduce that $\widehat{L}_f\widehat{L}_{f^\circ}M(f)^\frac{1}{n}\ge 1/e$. 
Hence, if $\widehat{L}_f\le \widehat{L}_{f_0}=1/e$, for any log-concave function $f$, which is the strong form of the slicing conjecture for log-concave functions mentioned above, then we conclude that $M(f)\ge M(f_0)=e^n$. Thus we get the following corollary.
\begin{corollary}\label{c:funct_slicing_implies_Mahler}
  The strong version of the slicing conjecture for log-concave functions in dimension $n$ implies Mahler's conjecture for log-concave functions in dimension $n$. 
\end{corollary}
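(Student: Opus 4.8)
The plan is to reduce the functional Mahler conjecture to the single inequality $M(f)\ge e^{n}$ for a \emph{minimizer} $f$ of the functional Mahler volume, and then to extract this inequality from Theorem~\ref{t:Cov_Matrix_Ineq} together with the strong slicing hypothesis. For the reduction, recall that $M(\cdot)$ attains its infimum over integrable log-concave functions at some $f$ — for instance, one first minimizes the volume product $P(\cdot)$ and then translates, so that $M(f)=P(f)=\inf_{h}M(h)$. Such an $f$ is in particular a local minimizer of $M$, hence Theorem~\ref{t:Cov_Matrix_Ineq} applies to it.

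Granting this, I would carry out the determinant computation already indicated in the text just after the statement of the theorem. By part~(2) of Theorem~\ref{t:Cov_Matrix_Ineq} we have $\cov(f^{\circ})\ge\cov(f)^{-1}$ in the Loewner order, and since the determinant is monotone on positive definite matrices this yields $\det\cov(f)\det\cov(f^{\circ})\ge1$. Expanding $\widehat{L}_{f}^{\,2n}=\bigl(e^{-h(f)}/\int f\bigr)^{2}\det\cov(f)$ for both $f$ and $f^{\circ}$ and multiplying, the product $\int f\int f^{\circ}$ collapses to $M(f)$, so
\[
\bigl(\widehat{L}_{f}\,\widehat{L}_{f^{\circ}}\bigr)^{2n}=\frac{e^{-2(h(f)+h(f^{\circ}))}}{M(f)^{2}}\,\det\cov(f)\det\cov(f^{\circ})\ \ge\ \frac{e^{-2(h(f)+h(f^{\circ}))}}{M(f)^{2}}.
\]
Taking $2n$-th roots and using $h(f)+h(f^{\circ})=n$ from part~(1) gives $\widehat{L}_{f}\,\widehat{L}_{f^{\circ}}\ge e^{-1}M(f)^{-1/n}$.

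Finally I would feed in the strong slicing hypothesis. Since $0<\int f<\infty$, the polar $f^{\circ}$ is again an integrable log-concave function with $\int f^{\circ}=M(f)/\int f<\infty$, so the hypothesis $\widehat{L}_{g}\le\widehat{L}_{f_{0}}=1/e$ applies both to $g=f$ and to $g=f^{\circ}$; therefore $\widehat{L}_{f}\,\widehat{L}_{f^{\circ}}\le e^{-2}$. Combining with the previous bound gives $e^{-1}M(f)^{-1/n}\le e^{-2}$, i.e. $M(f)\ge e^{n}$. As $f$ minimizes the functional Mahler volume and $M(f_{0})=e^{n}$, we conclude that $M(g)\ge M(f)\ge e^{n}=M(f_{0})$ for every integrable log-concave function $g$, which is precisely the functional Mahler conjecture in dimension $n$.

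\textbf{Main obstacle.} The computation above is immediate once Theorem~\ref{t:Cov_Matrix_Ineq} is in hand; the genuinely delicate point is the reduction step — one must know that the functional Mahler volume is actually minimized by a bona fide integrable log-concave function whose support has nonempty interior, so that $\cov(f)$ and $\cov(f^{\circ})$ are positive definite and Theorem~\ref{t:Cov_Matrix_Ineq} applies. This is the only place where one has to leave the elementary linear-algebraic and entropic manipulations and appeal to (or reprove) the existence and regularity theory for the functional volume product.
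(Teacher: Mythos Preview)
Your argument is correct and is essentially identical to the paper's: take a (local) minimizer $f$ of $M$, use $\cov(f^\circ)\ge\cov(f)^{-1}$ to get $\det\cov(f)\det\cov(f^\circ)\ge1$, combine with $h(f)+h(f^\circ)=n$ to deduce $\widehat{L}_f\widehat{L}_{f^\circ}M(f)^{1/n}\ge e^{-1}$, and then invoke the strong slicing hypothesis on both $f$ and $f^\circ$. Your flagged obstacle — the existence of a minimizer with nondegenerate covariance — is a genuine point that the paper likewise leaves implicit.
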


This corollary, as well as its analogue for convex bodies, due to Klartag, attracts the attention to the strong version of the slicing conjectures. As for Mahler's conjecture, there are also a symmetric versions of the conjectures which are the following: is it true that, for any symmetric convex body $K$ in dimension $n$, one has $L_K\le L_{[-1,1]^n}=1/\sqrt{12}$? The functional log-concave analogue asks if, among log-concave even functions, one has $\widehat{L}_f\le \widehat{L}_{f_{\infty}}=1/\sqrt{12}$, where $f_{\infty}(x)=\chi_{_{[-1,1]^n}}(x)$.
\\

Notice that the strong version of the slicing conjecture for general convex bodies was considered by Campi, Colesanti and Gronchi \cite{CCG} in the form of the so-called Sylvester's problem, where they also proved the case $n=2$. Then,  Bisztriczky and  B\"{o}r\"{o}czky \cite{bis-bor}  established the symmetric case of the conjecture in dimension two and Meckes \cite{Meck} generalized this result to more general symmetric Sylvester's type problems. More recently, the functional analogue of the strong slicing conjecture for log-concave functions was  proved in dimension one for even functions by  Madiman, Nayar and Tkocz in \cite{mnt} and in 2023 for the general case, by  Melbourne,  Nayar and  Roberto in \cite{MeNaRo}. 
\\

As a second objective of this paper, we shall delve into the equivalences between the geometrical and functional strong forms of the slicing conjecture. For instance, among other results, we show that the strong slicing conjecture for log-concave functions in $\R^{n+1}$ implies the strong slicing conjecture for convex bodies in $\R^n$ (see Proposition \ref{p:functional_implies_geometrical}).
\\

The paper is organized as follows: in Section \ref{s:prelim} we recall some preliminaries and we fix some notation that will be used later on. Section \ref{s:discussion_slicing} is devoted to a brief discussion on the different versions of the functional strong slicing conjecture, and to show which one provides, via Theorem \ref{t:Cov_Matrix_Ineq}, an analogue of Klartag's statement in this setting. The proofs of our main results are collected in Section \ref{s:main} and along Section \ref{s:equivalences_between_conjectures}, we present an investigation into the correlations between functional and geometrical conjectures. 

\section{Preliminaries}\label{s:prelim}

We shall work in the $n$-dimensional Euclidean space $\R^n$  with the standard inner product $\esc{\cdot,\cdot}$, and  $x_{i}$ are used for the $i$-th coordinate of a vector $x \in \R^n$.  Given any set $M \subset \R^n$, we  use $\chi_{_{M}}$ to denote its characteristic function. Furthermore, $\|x\|_{K} = \inf \{\lambda > 0 :   x \in \lambda K\}$ is the gauge function of a convex body $K$ containing the origin in its interior.
\\

A function $f : \R^n \to \R_{+}$ is said to be \emph{log-concave} if $f(x) = e^{-\varphi(x)}$, where $\varphi : \R^n \to \R \cup \{+\infty\}$ is a convex function. Let us recall that, for any integrable log-concave function $f$, the \emph{functional Mahler volume} and the \emph{functional volume product} are defined as
\begin{equation}
    M(f) = \int_{\R^n} e^{-\varphi(x)}\,\dlat x \int_{\R^n} e^{-\mathcal{L}\varphi(x)}\,\dlat x \quad  \text{and} \quad  P(f)=\inf_{z\in\R^n}M(f_z),
\end{equation}
where $f_z(x)=f(x-z)$, and $\mathcal{L}\varphi(x) : \R^n \to \R_{+}$ is the Legendre transform of $\varphi$ which is given by
$$
\mathcal{L}\varphi(y) = \sup_{x} \esc{x,y} - \varphi(x).
$$
Note that we write $f^{\circ}(x) = e^{-\mathcal{L}\varphi(x)}$. The functional Blaschke-Santaló inequality for log-concave functions, proven in \cite{Ball} for even functions (see also \cite{NakTsu} for a new proof using semi-groups), and obtained in full generality in \cite{AaKlMi}, states that
$$
P(f) \leq P(e^{-\frac{|x|^2}{2}}) = (2\pi)^n.
$$
As mentioned in the introduction, its reverse counterpart, known as Mahler's conjecture, claims that (see e.g \cite{FrMe2}) if $f : \R^n \to \R_{+}$ is an integrable log-concave  function, then 
\begin{equation}\label{e:func_Mahler}
P(f) \geq P\left(e^{-\sum_{i=1}^{n} x_i}\chi_{_{[-1,+\infty)^n}}(x)\right) = e^n.
\end{equation}
When $f$ is unconditional, it was proved in \cite{FrMe1} that
\begin{equation}\label{even:func_Mahler}
P(f) \geq P\left(e^{-\sum_{i=1}^{n} |x_i|}\right) = 4^n.
\end{equation}
The case $n=1$ in \eqref{e:func_Mahler} was also verified in \cite{FrMe2}. For even functions, the case $n=2$ of inequality \eqref{even:func_Mahler} was proved in \cite{FN}.
\smallskip



For the reader's convenience, we finish this section by recalling the two strong forms of the slicing conjecture (both the symmetric and the non-symmetric case) in the geometrical setting. 
\begin{conjecture}\label{GC:symmetric}
    Let $K \subset \R^n$ be a centrally symmetric convex body (i.e., $K = -K$). Then
    \begin{equation*}
        L_{K} \leq L_{[-1,1]^n} = \frac{1}{\sqrt{12}}.
    \end{equation*}
\end{conjecture}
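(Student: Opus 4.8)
We outline a possible line of attack; Conjecture~\ref{GC:symmetric} is a well-known open problem, established only for $n\le 2$ (\cite{bis-bor}, see also \cite{Meck}), so what follows is a strategy rather than a proof. The most direct approach mirrors the planar case: a shadow-system reduction to polytopes of small combinatorial complexity. After normalizing $K$ to isotropic position, the inequality may be reformulated as: among centrally symmetric convex bodies $K\subset\R^n$ of volume one, $\det\cov(K)\le 12^{-n}$, with equality precisely for parallelepipeds (note $L$ is affine invariant, so every parallelepiped has $L=1/\sqrt{12}$). The first step is to show that $\sup_K L_K$ over centrally symmetric bodies in a fixed dimension is attained; this is immediate from a Blaschke-selection argument once one restricts, by affine invariance, to bodies in John position, where $B^n_2\subseteq K\subseteq\sqrt n\,B^n_2$.

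Given a maximizer $K$, the second step is a perturbation argument: deform $K$ by moving one vertex along a fixed line, producing a shadow system $K_t$. Along such a system $\vol(K_t)$ is affine in $t$ while each second moment $\int_{K_t}\esc{x,\theta}^2\,\dlat x$ is convex in $t$; choosing the shadow direction and the coordinate frame appropriately, one aims to show that the corresponding one-variable restriction of $t\mapsto L_{K_t}$ has no strict interior maximum, forcing $K$ to be a polytope whose vertices cannot be spread apart. Iterating this, one would like to reduce to polytopes of bounded complexity — ideally to parallelepipeds, which have only $2n$ facets — and then conclude by a direct computation in the remaining family, possibly after running the dual argument on supporting hyperplanes.

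An alternative route, more in the spirit of the present paper, is to pass to the functional side: by Proposition~\ref{p:functional_implies_geometrical} it suffices to prove the strong symmetric slicing conjecture for even log-concave functions on $\R^{n+1}$. The one-dimensional functional case is known (\cite{mnt,MeNaRo}), so one would need a genuine dimension induction for the functional inequality, presumably exploiting the log-Laplace perturbations and the entropic isotropic constant $\widehat{L}_f$ already used in the proof of Theorem~\ref{t:Cov_Matrix_Ineq}, together with a Prékopa-type marginalization in the extra coordinate.

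The main obstacle is precisely where the two-dimensional proofs cease to work. The quantity $\det\cov(K)$ is not a separable function of the coordinates, so the convexity of individual second moments along a shadow system is not preserved after taking the determinant, and one lacks a clean monotonicity of $\det\cov(\cdot)$ under Steiner-type symmetrization; at the same time the combinatorics of reducing the number of vertices (or facets) of an extremal polytope becomes intractable for $n\ge 3$. Producing either the right monotonicity statement for $\det\cov$ under symmetrization, or a dimension-inductive scheme on the functional side compatible with the sharp constant $1/\sqrt{12}$ (equivalently $\widehat{L}_{f_\infty}=1/\sqrt{12}$), is where a new idea is required.
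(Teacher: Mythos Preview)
The paper does not prove this statement: it is stated as Conjecture~\ref{GC:symmetric}, an open problem, and the paper only records that the case $n\le2$ is known \cite{bis-bor,Meck} without offering any argument toward the general case. Your proposal is therefore not competing against any proof in the paper, and you correctly flag at the outset that what follows is a strategy rather than a proof; there is no gap to point to beyond the one you already acknowledge.

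One small correction to your outline: in the alternative functional route you invoke Proposition~\ref{p:functional_implies_geometrical}, but that proposition handles the \emph{general} (non-symmetric) case and passes from dimension $n+1$ to $n$. For centrally symmetric bodies the relevant implication is Proposition~\ref{p:geometrical_implies_functional}~ii), which goes from the even functional conjecture in $\R^n$ to the symmetric geometric conjecture in the \emph{same} dimension $\R^n$, not $\R^{n+1}$; so the dimension shift you describe is not needed in the symmetric setting.
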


\begin{conjecture}\label{GC:non_symmetric}
    Let $K \subset \R^n$ be a convex body. Then
    \begin{equation*}
         L_K \leq L_{\Delta^n} = \frac{(n!)^{1/n}}{(n+1)^{(n+1)/2n}\sqrt{n + 2}}.
    \end{equation*}
\end{conjecture}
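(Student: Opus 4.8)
The statement is the strong (non-symmetric) form of the slicing conjecture, which is open in general; what follows is the natural line of attack rather than a proof. Since $L_K=\bigl(\det\cov(K)\bigr)^{1/2n}\vol(K)^{-1/n}$ is invariant under invertible affine maps, the plan is to recast the inequality as the extremal statement that the regular simplex maximizes $L_K$ over all convex bodies in $\R^n$, and to place each competitor in isotropic position so that $L_K^{2n}=\det\cov(K)/\vol(K)^2$ becomes the ratio of a Sylvester-type functional to a power of the volume. One then wants to show that this functional is maximized exactly at the simplex, with value $\det\cov(\Delta^n)/\vol(\Delta^n)^2=(n!)^2/\bigl((n+1)^{n+1}(n+2)^n\bigr)$, which is precisely $L_{\Delta^n}^{2n}$; the factor $n+2$ in the stated constant is exactly the one coming from the second moments of the uniform measure on the simplex.

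The first step would be to deform an arbitrary convex body along a linear-parameter shadow system $K_t$ in the sense of Campi--Colesanti--Gronchi \cite{CCG}: fix a direction and a speed function $v$, and track $t\mapsto\Phi(K_t)$, where $\Phi$ encodes $\det\cov(K)/\vol(K)^{(n+2)/n}$, the scaling-invariant proxy for $L_K^{2n}$, equivalently the normalized expected volume of a simplex spanned by $n+1$ independent uniform points of $K$. If one could prove that, after the correct normalization, $t\mapsto 1/\Phi(K_t)$ is convex along every such family — so that $\Phi$, hence $L_{K_t}$, attains its maximum at an endpoint — then a limiting and compactness argument would drive the extremum toward degenerate, "simplicial'' configurations, and an induction on the number of vertices would reduce the problem to polytopes with few vertices and ultimately to the simplex, where the explicit computation above closes the case. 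This is exactly the mechanism by which \cite{CCG} settled $n=2$ in the non-symmetric case, and by which \cite{bis-bor,Meck} handled the symmetric $n=2$ case (Conjecture \ref{GC:symmetric}) and related Sylvester-type functionals; so a reasonable intermediate target is to establish such shadow-system convexity in dimension $3$, or for a wider family of perturbations than the ones currently understood.

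The main obstacle — and the reason the conjecture remains open — is precisely establishing the required convexity (or any monotonicity) along a class of deformations rich enough to connect a general convex body to the simplex: the order-two moment functional $\det\cov$, once raised to the power needed for scaling invariance, does not behave as tamely under shadow systems as the volume or as lower-degree functionals do. Even the weak form of the slicing conjecture — a dimension-free bound $L_K\le C$ — needed the successive advances of Bourgain, Klartag, Chen, and Klartag--Lehec, none of which is quantitatively tight enough to produce the sharp constant $L_{\Delta^n}$. A complementary route would be to combine the covariance inequality of Klartag \cite{Kl} (reproved in \cite{BaSoTz}), which controls $L_KL_{K^\circ}M(K)^{1/n}$ at local minimizers of the volume product, with a second, independent inequality relating $L_K$ and $L_{K^\circ}$, so as to pin down $L_K$ itself; but at present no such input is available, and the statement must be left as a conjecture.
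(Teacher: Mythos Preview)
Your proposal is appropriate in the only way it can be: the statement is labeled \emph{Conjecture} in the paper and is not proved there, so there is no ``paper's own proof'' to compare against. You correctly identify the statement as the open strong slicing conjecture, correctly recognize the affine invariance of $L_K$, and your discussion of the shadow-system/Sylvester-functional approach of \cite{CCG} (and its symmetric analogues \cite{bis-bor,Meck}) accurately reflects the state of the art and the obstacles; the paper likewise cites exactly these references when it records that only the case $n=2$ is known.

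One small point worth sharpening: your last paragraph suggests combining Klartag's covariance inequality at volume-product minimizers with a hypothetical relation between $L_K$ and $L_{K^\circ}$ to pin down $L_K$. In the paper the logical dependence runs the other way --- the covariance inequality is used to show that the strong slicing conjecture would \emph{imply} Mahler's conjecture (Corollary~\ref{c:slicing_implies_Mahler}), not to attack slicing itself --- so that route, as you note, currently buys nothing toward Conjecture~\ref{GC:non_symmetric}. The paper's own contribution in this direction is the partial converse in Proposition~\ref{p:functional_implies_geometrical}: the functional strong slicing conjecture in $\R^{n+1}$ would imply Conjecture~\ref{GC:non_symmetric} in $\R^n$, via the explicit log-concave function of Lemma~\ref{l:relationship_conjectures}. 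That is a genuinely different reduction from the shadow-system program you outline, and you might mention it as an alternative line of attack.
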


\section{On the functional isotropic constant}\label{s:discussion_slicing}


As mentioned earlier, in order to prove Theorem \ref{t:Cov_Matrix_Ineq} one has to use a specific definition of the isotropic constant. To explain this, let us first recall that, given an integrable centered $\log$-concave function with non-zero integral $f:\R^n \to \R_{+}$, the following definitions of the isotropic constant are contemplated in the literature:
\begin{equation*}
    \begin{split}
        L_{f} &= \left(\frac{\max_{x} f(x)}{\int_{\R^n}f(x) \,\dlat x}\right)^{1/n}\bigl(\det\cov(f)\bigr)^{1/2n},\\
        \widetilde{L}_{f} &= \left(\frac{f(0)}{\int_{\R^n}f(x) \,\dlat x}\right)^{1/n}\bigl(\det\cov(f)\bigr)^{1/2n} \quad \text{and}\\
        \widehat{L}_f &= \left(\frac{e^{-h(f)}}{\int_{\R^n}f(x) \,\dlat x}\right)^{1/n}\bigl(\det\cov(f)\bigr)^{1/2n}.
    \end{split}
\end{equation*}


We would also like to point out that the precise extremizers of the isotropic constant depend on the definition. On the one hand, the minimizer of $\widehat{L}_f$ is well known as it boils down to the study of entropy maximizers among log-concave functions with fixed covariance matrix. In fact, for every integrable centered log-concave function $f:\R^n \to \R_+$, one has as a consequence of Jensen's inequality that
$$
h(f) \leq h(\gamma) = \frac{n}{2}\log\left(2\pi e \sigma_2(f)^2\right),
$$
where $\gamma(x) = \frac{1}{(2\pi \sigma_2(f)^2)^{n/2}}e^{-\frac{|x|^2}{2 \sigma_2(f)^2}}$, and $\sigma_2(f) = \det \cov(f)^{1/2n}$. Thus one gets that
$$
\widehat{L}_f \geq \widehat{L}_{\gamma} = \frac{1}{\sqrt{2\pi e}}.
$$

On the other hand, in contrast with the previous situation, it was proved in \cite{Hen} (see also \cite[Proposition~2.3.12]{BrGiVaVr}) that
$$
L_f \geq L_{\chi_{_{B_2^n}}} = \frac{1}{\sqrt{n+2}}\vol{(B_2^n)}^{-1/n}.
$$
As far as we are aware, the minimizer of $\widetilde{L}_f$ is only known in dimension one. In this regard, it was shown in \cite{Fr2} that, for $f:\R\to\R_+$ log-concave, one has
$$
\widetilde{L}_{f} \geq \widetilde{L}_{\chi_{_{[-1,1]}}} = \frac{1}{\sqrt{12}}.
$$

The much more delicate study of maximizers leads us to the so-called strong slicing conjecture. We shall see that, as for the minimizers and based on the analysis of the one dimensional known results, the conjectured maximizers depend on the definition of the functional isotropic constant. Therefore, we shall contemplate different versions of this conjecture. 

 \begin{conjecture}\label{c:Functional_Slicing} Let $f:\R^{n} \to \R_{+}$ be an  integrable log-concave  function,  and $f_0,f_1, f_{\infty} : \R^n \to \R_{+}$ be the functions given by 
 $$ 
 f_0(x)= e^{-\sum_{i=1}^{n} x_i}\chi_{_{[-1,+\infty)^n}}(x), \quad f_1 (x) = e^{-\sum_{i=1}^{n} |x_i|} \quad \text{and} \quad f_{\infty}(x) = \chi_{_{[-1,1]^n}}(x).
 $$ Then
 \vspace{0.05cm}
    \begin{center}
        \begin{minipage}{2in}
             \begin{enumerate}
         \item $L_{f} \leq L_{f_0} = 1,$ 
\bigskip
         \item $\widetilde{L}_{f} \leq \widetilde{L}_{f_1} = \frac{1}{\sqrt{2}},$
\bigskip        
         \item $\widehat{L}_f \leq \widehat{L}_{f_0} = e^{-1}.$     
     \end{enumerate}
        \end{minipage}
    \end{center}
 \end{conjecture}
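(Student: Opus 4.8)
The statement has two parts of rather different character: identifying the three extremal values, which is a routine computation, and the three inequalities, which carry the open content. For the values, note that $f_0$, $f_1$ and $f_\infty$ are tensor products of one-dimensional log-concave functions, so every quantity entering an isotropic constant factorises over coordinates. One gets $\int_{\R^n}f_0 = e^n = \max_{\R^n}f_0$ (with the maximum attained at $(-1,\dots,-1)$), $\cov(f_0) = I_n$, and $h(f_0) = 0$ since the one-dimensional factor $e^{-x}\chi_{_{[-1,\infty)}}(x)$, once normalised, is a centred exponential of variance $1$; hence $L_{f_0} = (e^n/e^n)^{1/n}\cdot 1 = 1$ and $\widehat L_{f_0} = (e^{0}/e^n)^{1/n}\cdot 1 = e^{-1}$. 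Similarly $f_1(0) = 1$, $\int_{\R^n}f_1 = 2^n$ and $\cov(f_1) = 2I_n$, so $\widetilde L_{f_1} = (2^{-n})^{1/n}(2^n)^{1/2n} = 1/\sqrt2$. Thus the conjecture reduces to the three inequalities $L_f \le 1$, $\widetilde L_f \le 1/\sqrt2$, $\widehat L_f \le e^{-1}$ for all integrable centred log-concave $f$.

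For these, I would first normalise, replacing $f$ by $|\det T|(f\circ T)/\int f$ with $TT^T = \cov(f)$; this leaves all three isotropic constants unchanged and puts $f$ in the family $\mathcal{F}_n$ of centred isotropic log-concave probability densities ($\int f = 1$, $\cov(f) = I_n$). On $\mathcal{F}_n$ one has the trivial comparisons $\widetilde L_f \le L_f$ (as $f(0)\le\max f$) and $\widehat L_f \le L_f$ (as $h(f)\ge -\log\max f$), so the three parts are genuinely distinct, each being strictly stronger than $L_f\le 1$. Since isotropic log-concave densities have universal sub-exponential tail bounds, $\mathcal{F}_n$ is relatively compact and each functional attains its supremum; one only needs the upper semicontinuity of $\max f$, $f(0)$ and $e^{-h(f)}$ along a.e.-convergent sequences of log-concave densities, which follows from these tail bounds. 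The crux is then to show a maximiser is rigid: I would differentiate along the log-Laplace perturbations used to prove Theorem~\ref{t:Cov_Matrix_Ineq}, extract Euler--Lagrange conditions from the first- and second-order behaviour of the log-Laplace transform, and then try to run a shadow-system or symmetrisation reduction in the spirit of Rogers--Shephard, of Campi--Colesanti--Gronchi \cite{CCG}, and of Meckes \cite{Meck} to descend to $n = 1$, finally invoking the one-dimensional results: \cite{MeNaRo} for the non-even case governed by $f_0$, \cite{mnt} for the even case, and \cite{Fr2} for $\widetilde L$. For part (iii) it is worth recording that, after normalisation, $\widehat L_f\le e^{-1}$ is exactly the statement that every $f\in\mathcal{F}_n$ has $h(f)\ge n$, with equality for a product of exponentials, which suggests attacking (iii) via entropy-power-type inequalities rather than perturbations.

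The principal obstacle is the dimension reduction. In the geometric case the shadow-system method settles the strong slicing conjecture only up to $n = 2$ and is open for $n\ge3$; since, by Proposition~\ref{p:functional_implies_geometrical}, the functional strong slicing conjecture in $\R^{n+1}$ implies the geometric one in $\R^n$, the functional conjecture is at least as hard, and no genuine reduction to one dimension is known in general. A secondary, more technical, difficulty is that the conjectured extremizers are non-smooth — $f_0$ and $f_\infty$ have corners and restricted supports — so the variational analysis must use one-sided derivatives and must control the boundary of the support, where the log-Laplace perturbations behave singularly. Consequently I would expect this route to reach only the low-dimensional cases and the one-dimensional even case, the general conjecture staying open.
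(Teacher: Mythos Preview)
The statement you are addressing is a \emph{conjecture}, not a theorem, and the paper does not purport to prove it. The paper's entire content for this statement consists of (a) the asserted extremal values $L_{f_0}=1$, $\widetilde L_{f_1}=1/\sqrt2$, $\widehat L_{f_0}=e^{-1}$, and (b) the observation that all three inequalities are known in dimension~$1$, by results of \cite{Fr2} for parts~(i) and~(ii) and of \cite{MeNaRo} for part~(iii). Your computation of the extremal values via tensorisation is correct and in fact more explicit than anything the paper writes out; your recognition that the inequalities themselves constitute the open content is exactly right; and your closing paragraph correctly locates the principal obstruction (no dimension reduction is known, and by Proposition~\ref{p:functional_implies_geometrical} the functional conjecture in $\R^{n+1}$ dominates the geometric one in $\R^n$, itself open for $n\ge3$). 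So there is no ``gap'' to flag beyond the one you yourself flag: the strategy you sketch is a reasonable research programme, not a proof, and you say so.

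One minor correction in your discussion: the sentence ``the three parts are genuinely distinct, each being strictly stronger than $L_f\le1$'' is not accurate. Part~(i) \emph{is} the inequality $L_f\le1$, so it cannot be strictly stronger than itself. Part~(iii) does imply part~(i), via the reverse comparison $e^{-1}L_f\le\widehat L_f$ from \eqref{e:Rela_isotrop} (this is precisely the content of the Remark following the conjecture). Part~(ii), however, is not comparable to~(i): from $\widetilde L_f\le L_f$ one only deduces $\widetilde L_f\le1$ assuming~(i), not $\widetilde L_f\le1/\sqrt2$, and there is no reverse bound of the required strength. So~(iii) is strictly stronger than~(i), while~(ii) is logically independent of~(i) as far as is known.
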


\begin{remark}
Note that, if $f$ is centered, using Jensen's inequality and \cite[Theorem~1]{Fr} it follows that
\begin{equation}\label{e:Rela_isotrop}
\frac{\max_x f(x)}{e^n} \leq e^{-h(f)} \leq f(0).
\end{equation}
As a consequence, one immediately obtains that
$$
e^{-1}L_{f} \leq \widehat{L}_f \leq \widetilde{L}_{f} \leq L_{f}.
$$
Thus,  Conjecture \ref{c:Functional_Slicing} iii) implies Conjecture \ref{c:Functional_Slicing} i).
\end{remark}

 These conjectures are supported by the fact that they hold in dimension one. As a matter of fact, on the one hand, from \cite[Theorem~8]{Fr2}, for any integrable log-concave  function $f : \R\to \R_{+}$,
$$
\sigma_2(f) \leq \frac{\int_{\R} f(t) \,\dlat t}{\max_{t} f(t)},
$$
with equality for the function $f_0(t) = e^{-t}\chi_{_{[-1,\infty)}}(t)$. On the other hand, using again \cite[Theorem~8]{Fr2}, one has
$$
\sigma_2(f) \leq \frac{\int_{\R} f(t) \,\dlat t}{2f(0)},
$$
with equality for the function $f_1(t) = e^{-|t|}$. Therefore, we get i) and ii) in dimension 1:
$$
L_{f} \leq L_{f_0} = 1 \quad \text{and} \quad \widetilde{L}_{f} \leq \widetilde{L}_{f_1} = \frac{1}{\sqrt{2}}.
$$
Moreover, very recently, Melbourne, Nayar and Roberto proved in \cite{MeNaRo} that, for any log-concave density function $f : \R\to \R_{+}$, one has 
$$
h(f) \geq \log \sigma_2(f) + 1,
$$
with equality for $f_0(t)= e^{-t}\chi_{_{[-1,\infty)}}(t)$. Hence, we get iii) in dimension 1:
$$
\widehat{L}_f \leq \widehat{L}_{f_0} = e^{-1}.
$$
 



We also consider the corresponding conjecture for even functions. Notice that, in this case $\max f=f(0)$ so that $L_f=\widetilde{L}_{f}$.
\begin{conjecture}\label{c:Functional_Slicing_even}
Let $f:\R^{n} \to \R_{+}$ be an integrable log-concave even function
\vspace{0.05cm}
    \begin{center}
        \begin{minipage}{2in}
             \begin{enumerate}
         \item $L_{f} \leq L_{f_1} = \frac{1}{\sqrt{2}},$ 
\bigskip        
         \item $\widehat{L}_f \leq \widehat{L}_{f_{\infty}} = \frac{1}{\sqrt{12}}.$    
     \end{enumerate}
        \end{minipage}
    \end{center}
\end{conjecture}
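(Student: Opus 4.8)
The plan is to run the classical variational argument, specialised to the log-Laplace perturbations that are the backbone of this paper, and then to try to reduce to the one-dimensional cases. First I would use the invariances of the functionals: both $L_{f}$ and $\widehat{L}_{f}$ are unchanged under $f\mapsto c\,f(Ax)$ for $c>0$ and $A\in\mathrm{GL}_{n}$, since the scalings of $\max f$, $\int f$ and $e^{-h(f)}$ are compensated by the scaling of $\det\cov(f)$. Hence it suffices to work with even log-concave functions in isotropic position, $\cov(f)=\mathrm{Id}$ and $\int f=1$, where $L_{f}=(\max f)^{1/n}=f(0)^{1/n}$ and $\widehat{L}_{f}=e^{-h(f)/n}$. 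The normalised class is compact for $L^{1}$ (equivalently pointwise) convergence — isotropic log-concave densities have uniformly bounded sup-norm and uniform sub-exponential tails, and log-concavity passes to the limit — and both $f\mapsto\max f$ and $f\mapsto -\int f\log f$ are upper semicontinuous on it, so $\sup_{f}L_{f}$ and $\sup_{f}\widehat{L}_{f}$ over even log-concave $f$ on $\R^{n}$ are attained. It then remains to show that the maximisers are, up to $\mathrm{GL}_{n}$ and scaling, $f_{1}$ for $L_{f}$ and $f_{\infty}$ for $\widehat{L}_{f}$.

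To identify a maximiser $f_{*}=e^{-\varphi_{*}}$ I would write down its Euler--Lagrange equations, testing against the \emph{even} admissible variations. Two families are natural: the log-Laplace tilts $f_{t}=e^{-\varphi_{*}+t\,q}$ with $q$ an even quadratic form — exactly the perturbations used to prove Theorem~\ref{t:Cov_Matrix_Ineq}, and whose effect on $\int f_{t}$, $h(f_{t})$ and $\det\cov(f_{t})$ is governed by the first two derivatives of the log-Laplace transform — and the shape variations $\varphi_{t}=\varphi_{*}+t\,\psi$ with $\psi$ even and preserving convexity, for which the corresponding derivatives are explicit integrals of $\psi$ against $f_{*}$ and against $f_{*}$ restricted to a level set. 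Imposing stationarity (one-sided, in the convexity-preserving directions) for all such $\psi$ should force $\varphi_{*}$ to be affine on the region carrying the relevant weight: for $\widehat{L}_{f}$ the stationarity relation couples $\varphi_{*}$ with $\log f_{*}$ and, together with the complementary-slackness contribution of the non-smooth part of $\varphi_{*}$, is expected to single out the indicator $f_{\infty}=\chi_{_{[-1,1]^{n}}}$; for $L_{f}$ only $\max f=f_{*}(0)$ enters, so the variational information localises at the origin and on the boundaries of level sets, pointing to $f_{1}=e^{-\sum_{i}|x_{i}|}$, whose level sets are cross-polytopes.

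To pass from dimension one — where (i) is the bound $\sigma_{2}(f)\le\int f/(2f(0))$ of \cite{Fr2} and (ii) is the even one-dimensional strong slicing inequality of \cite{mnt} — to general $n$, I would attempt an induction. The tensorisation identities $L_{f\otimes g}^{m+k}=L_{f}^{m}L_{g}^{k}$ and $\widehat{L}_{f\otimes g}^{m+k}=\widehat{L}_{f}^{m}\widehat{L}_{g}^{k}$, immediate from the block form of $\cov$, multiplicativity of $\int$, and additivity of the entropy, already give extremality within the class of products, so $f_{1}$ and $f_{\infty}$ are the right candidates; it would then remain to disintegrate a general even $f$ along one coordinate, apply the one-dimensional bound to the conditional densities, and reassemble using log-concavity of the $(n-1)$-dimensional marginal. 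Alternatively, for (ii) one may feed the two-dimensional symmetric Sylvester/slicing theorems of \cite{bis-bor} and \cite{Meck} through the body-to-function correspondences of Section~\ref{s:equivalences_between_conjectures} to get the case $n=2$.

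The hard part is precisely the reassembly step: the isotropic constant of a section or of a marginal of $f$ is not comparable in a usable way to $L_{f}$ or $\widehat{L}_{f}$ — this is the same obstruction that keeps the strong symmetric slicing conjecture for convex bodies open for $n\ge 3$, and I do not expect the functional version to be genuinely easier, since a complete proof of (i)--(ii) would in particular settle that conjecture. I would therefore present the variational characterisation of the extremisers together with the product case as the robust content, record the low-dimensional instances obtainable via Section~\ref{s:equivalences_between_conjectures}, and leave the statement open in general.
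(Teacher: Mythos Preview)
This statement is recorded in the paper as an open \emph{conjecture}, not as a theorem: the paper gives no proof of (i) or (ii) for general $n$, only noting that the one-dimensional cases are known (via \cite{Hen,Fr2} for (i) and \cite{mnt} for (ii)). There is therefore no ``paper's own proof'' to compare your attempt against, and you yourself acknowledge this by explicitly leaving the statement open in general.

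A few of your intermediate steps would still need repair if you wanted to present this as partial progress. First, the semicontinuity is stated with the wrong sign: in isotropic position $\widehat L_f=e^{-h(f)/n}$, so attaining $\sup_f\widehat L_f$ requires $h$ to be \emph{lower} semicontinuous, whereas you assert that $-\int f\log f=h(f)$ is upper semicontinuous. (On the class of isotropic log-concave densities one can in fact argue continuity of $h$, but not for the reason you give.) Second, the Euler--Lagrange paragraph is purely heuristic: phrases like ``is expected to single out $f_\infty$'' and ``pointing to $f_1$'' do not identify the maximiser even locally, and no actual first-order condition is derived. Third, your proposed route to the two-dimensional functional case via \cite{bis-bor,Meck} and Section~\ref{s:equivalences_between_conjectures} does not work: Proposition~\ref{p:geometrical_implies_functional}(i) needs the geometric symmetric conjecture in \emph{every} dimension $n+m$, $m\in\N$, to deduce the functional one in dimension $n$, so knowing only the planar geometric result yields nothing; and part (ii) goes in the opposite direction (functional $\Rightarrow$ geometric).
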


Again, these conjectures are known for $n=1$: the case i) was proved in \cite{Hen}, while
Conjecture \ref{c:Functional_Slicing_even} ii) was proved in dimension one by Madiman, Nayar and Tkocz in \cite{mnt} getting that
$$
h(f) \geq \log \sigma_2(f) + \log \sqrt{12},
$$
and thus,
$$
\widehat{L}_f \leq \frac{1}{\sqrt{12}},
$$
with equality for $f_{\infty}(t) = \chi_{_{[-1,1]}}(t)$.
\smallskip

As mentioned in the introduction, once the precise statements of the functional analogues of the strong slicing conjecture are established, our aim is to show why Theorem \ref{t:Cov_Matrix_Ineq} works only for the definition of the isotropic constant involving the entropy.
\smallskip

To this purpose, let $f:\R^{n} \to \R_{+}$ be an integrable log-concave function that is a local minimizer of the functional Mahler volume. On the one hand, if Conjecture \ref{c:Functional_Slicing}  i) holds true, the best we can get is that
\begin{equation*}
\begin{split}
   M(f) &\geq \frac{\max_{x}f(x)\max_{x}f^{\circ}(x)}{(L_{f_0}L_{f^{\circ}_0})^{n}} = \max_{x}f(x)\max_{x}f^{\circ}(x) \geq 1.
   \end{split}
\end{equation*}
On the other hand, assuming that Conjecture \ref{c:Functional_Slicing}  ii) stands, one can only ensure the following:
\begin{equation*}
    M(f) \geq \frac{f(0)f^{\circ}(0)}{(\widetilde{L}_{f_1}\widetilde{L}_{f^{\circ}_1})^{n}} = 2^{n/2}f(0)f^{\circ}(0) \geq 2^{n/2} e^{-n}.
\end{equation*}

Nonetheless, if Conjecture \ref{c:Functional_Slicing} iii) is true, we can deduce that
\begin{equation*}
    M(f) \geq \frac{e^{-\bigl(h(f) + h(f^{\circ})\bigr)}}{(\widehat{L}_{f_0}\widehat{L}_{f^{\circ}_0})^{n}} = e^{2n -\bigl(h(f) + h(f^{\circ})\bigr)}.
\end{equation*}
Hence, as mentioned in the introduction, Theorem \ref{t:Cov_Matrix_Ineq} shows that $h(f)+ h(f^{\circ}) = n$ and thus Mahler's conjecture follows. Therefore, on this matter, it seems natural to consider $\widehat{L}_f$.
\smallskip

Finally, as a matter of fact, we have from \eqref{e:Rela_isotrop} that $h(f) + h(f^{\circ}) \leq 2n$. Thus, since Theorem \ref{t:Cov_Matrix_Ineq} ensures that, for every local minimizer of the functional volume product, $h(f) + h(f^{\circ}) =n$,  the following question may be natural in this context.

\begin{question}\label{c:Conjecture4}
Let $f: \R^n\to \R_{+}$ be an integrable log-concave function. Does it hold that
   \begin{equation*}
       h(f) + h(f^{\circ}) \leq n?
   \end{equation*}
\end{question}

Unfortunately, this inequality doesn't hold, as shown by the following example.
Let $f(x)=\chi_{[-1,1]}(x)+e^{1-|x|}\chi_{(1,+\infty)}(|x|)$. Then, it is not difficult to see that $f^\circ(y)=e^{-|y|}\chi_{[-1,1]}(y)$. We shall see that, for $t$ large enough, one has $h(f^t)+h((f^t)^\circ)>1$, and hence the answer to Question \ref{c:Conjecture4} is negative. A quick calculation shows that $\int f^t=2(1+\frac{1}{t})$ and $h(f^t)=\frac{1}{t+1}$. One also has $h((f^t)^\circ)=1-\frac{t}{e^t-1}$. Therefore, the inequality $h(f^t)+h((f^t)^\circ)\le1$ would be equivalent to 
\[
\frac{1}{t+1}+1-\frac{t}{e^t-1}\le1.
\]
This in turn is equivalent to $e^t\le 1+t+t^2$ which doesn't hold for $t$ large enough.

\section{Proof of the main results}\label{s:main}
In order to prove Theorem \ref{t:Cov_Matrix_Ineq}, we follow an adaptation of the strategy used in \cite{BaSoTz}. To this purpose, let $f : \R^n \to \R_+$ be a log-concave integrable function. For any $x,y\in\R^n$, we define the following log-concave perturbation of $f$, $f_{x,y} : \R^n \to \R_{+}$ defined by
\[
f_{x,y}(z)=f(z-x)e^{-\langle z,y\rangle}\quad\hbox{thus}\quad 
f_{x,y}^\circ(w)=f^\circ(w-y)e^{-\esc{x,w-y}}.
\]
Let $F:\R^n \times \R^n \to \R_{+}$ be the function defined as $F(x,y)=M(f_{x,y})$. After changing variables, we get
$$
F(x,y)=e^{-\esc{x,y}}\int_{\R^n} e^{-\esc{y,z}} f(z)\,\dlat z \int_{\R^n} e^{-\esc{z,x}}f^\circ(z) \,\dlat z.
$$
Recall that the log-Laplace transform of a non-negative function $f$, denoted by $\Lambda_f: \R^n\to\R\cup\{+\infty\}$, is defined, for $x\in\R^n$, by
$$
\Lambda_f (x) = \log \int_{\R^n} e^{-\esc{x,z}}f(z)\,\dlat z.
$$
Therefore, since we want to study the minimum of the function $F$, we shall work with the function $\widetilde{F}:\R^n \times \R^n \to \R_{+}$ given by
\[
 \widetilde{F}(x,y) = \log F(x,y)
        = -\esc{x,y} + \Lambda_f(y)+ \Lambda_{f^\circ}(x). 
\]
This function is thus expressed in terms of the logarithmic Laplace transform of $f$ and $f^\circ$, which has been widely use to study log-concave functions and convex bodies (see \cite{KlMi2} and \cite[Chapter~7]{BrGiVaVr}). We collect in the following lemma two of its properties that we use (see e.g. \cite[Proposition~7.2.1]{BrGiVaVr}).
\begin{lemma}\label{l:properties_laplace_transform}
Let $f : \R^n \to \R_+$ be a log-concave integrable function. Let $\Lambda_f: \R^n \to \R$ be the function given by
$$
\Lambda_f (x) = \log \int_{\R^n} e^{-\esc{x,z}}f(z)\,\dlat z,
$$
and let $\mu_{f,x}$ be the probability measure with density function
$$
d\mu_{f,x}(z) = \frac{e^{-\esc{x,z}}f(z)}{e^{\Lambda_f (x)}}.
$$
Then, for any $x \in \R^n$,
\begin{equation*}
    \nabla\Lambda_f (x) = -\g(\mu_{f,x})\quad\hbox{and}\quad
    \text{\emph{Hess}}\, \Lambda_f (x) = \text{\emph{Cov}}(\mu_{f,x}).
\end{equation*}
\end{lemma}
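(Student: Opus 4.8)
The plan is to reduce everything to elementary differentiation under the integral sign of the partition function $Z(x):=e^{\Lambda_f(x)}=\int_{\R^n}e^{-\esc{x,z}}f(z)\,\dlat z$, followed by the chain rule for $\log$. First I would fix a point $x$ in the interior of the convex set $D=\{x\in\R^n:Z(x)<+\infty\}$ (which is where the perturbations used in the proof of Theorem~\ref{t:Cov_Matrix_Ineq} live) and check that $Z$ is $C^2$ near $x$ with derivatives obtained by differentiating under the integral. This is where log-concavity enters: since $f$ is integrable and log-concave it decays exponentially, so for $\delta>0$ small enough the points $x-\delta\varepsilon$ with $\varepsilon\in\{-1,1\}^n$ still lie in $D$; using $e^{\delta|z|_1}\le\sum_{\varepsilon\in\{-1,1\}^n}e^{\delta\esc{\varepsilon,z}}$ one obtains, for $|x'-x|\le\delta$, the pointwise bound $e^{-\esc{x',z}}f(z)\le e^{-\esc{x,z}}f(z)\,e^{\delta|z|_1}$, whose right-hand side is integrable because its integral is $\sum_{\varepsilon}Z(x-\delta\varepsilon)<+\infty$. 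Multiplying this dominating function by the polynomial factors $|z_i|$ and $|z_iz_j|$ (absorbed by shrinking $\delta$) legitimizes differentiating twice under the integral.

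Granted this, the computation is immediate. Differentiating $Z$ once pulls down a factor $-z_i$, so
\[
\nabla Z(x)=-\int_{\R^n}z\,e^{-\esc{x,z}}f(z)\,\dlat z=-Z(x)\int_{\R^n}z\,\dlat\mu_{f,x}(z)=-Z(x)\,\g(\mu_{f,x}),
\]
and dividing by $Z(x)$ gives $\nabla\Lambda_f(x)=\nabla\log Z(x)=-\g(\mu_{f,x})$. Differentiating twice pulls down $z_iz_j$, so $\mathrm{Hess}\,Z(x)=\int_{\R^n}zz^{T}e^{-\esc{x,z}}f(z)\,\dlat z=Z(x)\int_{\R^n}zz^{T}\,\dlat\mu_{f,x}(z)$, and the chain rule $\mathrm{Hess}\,\log Z=\frac{1}{Z}\mathrm{Hess}\,Z-\frac{1}{Z^{2}}(\nabla Z)(\nabla Z)^{T}$ yields
\[
\mathrm{Hess}\,\Lambda_f(x)=\int_{\R^n}zz^{T}\,\dlat\mu_{f,x}(z)-\g(\mu_{f,x})\g(\mu_{f,x})^{T}=\cov(\mu_{f,x}),
\]
which is the assertion.

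I do not expect a real obstacle here: the statement is the classical fact that the cumulant generating function of a measure has, as its first two derivatives, the mean and the covariance of the exponentially tilted measure. The only point needing a little care is the differentiation under the integral, i.e. exhibiting the integrable dominating function above and noting that $D$ has nonempty interior and that $\Lambda_f$ is real-analytic (in particular $C^2$) there; this is standard for log-concave integrable densities and is recorded e.g. in \cite[Chapter~7]{BrGiVaVr}. If one wishes to bypass any discussion of $D$ altogether, one may simply assume, as holds in all the applications in Section~\ref{s:main}, that $\Lambda_f$ is finite in a neighbourhood of the relevant point and run the same computation verbatim.
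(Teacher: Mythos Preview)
Your argument is correct and is precisely the standard computation of the first two cumulants of an exponentially tilted measure; the domination via $e^{\delta|z|_1}\le\sum_{\varepsilon}e^{\delta\esc{\varepsilon,z}}$ together with the exponential decay of an integrable log-concave $f$ is exactly the right way to justify differentiating under the integral on $\inter D$.

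There is nothing to compare, though: the paper does not give its own proof of this lemma. It is stated with a parenthetical reference to \cite[Proposition~7.2.1]{BrGiVaVr} and used as a black box. What you have written is essentially the proof one finds at that citation, so in that sense your approach coincides with the one the paper implicitly relies on.
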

We can now proceed with the proof of Theorem \ref{t:Cov_Matrix_Ineq}.
\begin{proof}[Proof of Theorem \ref{t:Cov_Matrix_Ineq}]
Let $f: \R^n \to \R_+$ be an integrable log-concave function which is a local minimizer of the functional volume product. From Lemma \ref{l:properties_laplace_transform} we get, for every $x,y \in \R^n$, that
      \begin{equation*}
    \begin{split}
    &\nabla_{x}\widetilde{F}(x,y) = -\bigl(y + \g(\mu_{f^\circ,x})\bigr) \quad \text{and}\\
    &\nabla_{y}\widetilde{F}(x,y) = -\bigl(x+ \g(\mu_{f,y})\bigr).
     \end{split}
    \end{equation*}
The latter, together with the second statement of Lemma \ref{l:properties_laplace_transform}, yields that
\begin{equation*}
        \mathrm{Hess}_{(x,y)} \widetilde{F}= \begin{pmatrix}\begin{array}{c|c}
\cov(\mu_{f^\circ,x}) & -I \\
\hline
-I & \cov(\mu_{f,y})
\end{array}
\end{pmatrix}.
    \end{equation*}
Now, taking into account that $\widetilde{F} = \log F$, we deduce that
\begin{equation*}
        \begin{split}
            &\mathrm{Jac}_{(0,0)}F = -M(f)\bigl(\g(f^{\circ}) \; , \;  \g(f) \bigr)\\
        \end{split}
    \end{equation*}
and
    \begin{equation*}
        \mathrm{Hess}_{(0,0)} F= M(f)\begin{pmatrix}\begin{array}{c|c}
\mathrm{Cov}(f^{\circ}) & -I \\
\hline
-I & \mathrm{Cov}(f)
\end{array}
\end{pmatrix}.
    \end{equation*}
Finally, since $\mathrm{Cov}(f^{\circ})$ and $\mathrm{Cov}(f)$ are two symmetric positive definite matrices, $\mathrm{Hess}_{(0,0)}F \geq 0$ if and only if $\mathrm{Cov}(f^{\circ}) \geq \mathrm{Cov}(f)^{-1}$.
\smallskip

To end the proof, we shall study another perturbation of the function $f$. In this regard, we define the function $p:\R_{+}\to \R_{+}$ given by 
\[
p(t)=M\left(f^t\right)=\int_{\R^n} f^t \int_{\R^n} \left(f^t\right)^\circ.
\]
Using that, for every $t>0$, 
\[
\left(f^t\right)^\circ(y)=\inf_x\frac{e^{-\esc{x,y}}}{f^t(x)}=\left(\inf_x\frac{e^{-\esc{x,\frac{y}{t}}}}{f(x)}\right)^t=\left(f^\circ\left(\frac{y}{t}\right)\right)^t
\]
and changing variables, one deduces that 
\[
p(t)=t^n\int_{\R^n} f^t\,  \int_{\R^n} (f^\circ)^t\, .
\]
Since $f$ is a local minimizer of the functional Mahler volume, then $p(t)\ge p(1)$, for all $t>0$ thus $(\log p)'(1)=0$ and $(\log p)''(1)\ge0$. Moreover, since
\[
(\log p)'(t)=\frac{n}{t}+\frac{\int_{\R^n} (\log f) f^t}{\int_{\R^n} f^t}+\frac{\int_{\R^n}(\log f^\circ) (f^\circ)^t}{\int_{\R^n} (f^\circ)^t},
\]
and
\[
\begin{split}
(\log p)''(t)&=-\frac{n}{t^2}+\frac{\int_{\R^n} (\log f)^2 f^t}{\int_{\R^n} f^t}+\left(\frac{\int_{\R^n} (\log f) f^t}{\int_{\R^n} f^t}\right)^2\\
&+ \frac{\int_{\R^n} (\log f^\circ)^2 (f^\circ)^t}{\int_{\R^n} (f^\circ)^t}+\left(\frac{\int_{\R^n} (\log f^\circ) (f^\circ)^t}{\int_{\R^n} (f^\circ)^t}\right)^2.
\end{split}
\]
The fact that $(\log p)'(1)=0$ and $(\log p)''(1)\ge0$ gives 
\[
h(f)+h(f^\circ)=n\quad\hbox{and}\quad V(f)+V(f^\circ)\ge n.
\]
\end{proof}

\section{Relationship between geometrical and functional conjectures}\label{s:equivalences_between_conjectures}


This section, in the spirit of what was done for Mahler's conjectures in \cite{FrMe2}, is devoted to study the correlation between the functional and geometrical forms of the strong slicing conjecture. We start by studying the relationships between the conjectures for even functions and centrally symmetric convex bodies.
\begin{proposition}\label{p:geometrical_implies_functional}
\noindent
\begin{enumerate}


     \item Let $n\geq 1$. If for any $m\in\N$, the strong slicing conjecture  (Conjecture \ref{GC:symmetric}) holds true for every centrally symmetric convex body in $\R^{n+m}$, then the functional strong slicing conjecture  holds also for every integrable log-concave even function in $\R^n$ (Conjecture \ref{c:Functional_Slicing_even} ii)).
     \smallskip
     
     \item Let $n\geq 1$. If the functional strong slicing conjecture holds true for every integrable log-concave even function in $\R^n$ (Conjecture \ref{c:Functional_Slicing_even} iii)), then the strong slicing conjecture  holds also for every centrally symmetric convex body in $\R^{n}$ (Conjecture \ref{GC:symmetric}).
     
\end{enumerate}
\end{proposition}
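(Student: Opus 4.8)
The second assertion is immediate. Given a centrally symmetric convex body $K\subset\R^n$, apply the functional conjecture to $f=\chi_{_{K}}$, an integrable log-concave even function equal to $1$ on its support; then $h(f)=0$, so $e^{-h(f)}=\max_x f(x)=f(0)=1$ and $\cov(f)=\cov(K)$, hence all three functional isotropic constants collapse to $L_K$. Thus $\widehat L_{\chi_{_{K}}}\le 1/\sqrt{12}$ is exactly $L_K\le 1/\sqrt{12}$.

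For the first assertion the plan is a tensorisation that turns a log-concave function on $\R^n$ into a family of centrally symmetric convex bodies of growing dimension — the reverse of the usual passage from convex bodies to functions. Let $f=e^{-\varphi}:\R^n\to\R_+$ be integrable, log-concave and even; since $\widehat L_f$ is invariant under $f\mapsto\lambda f$ and under linear substitutions, we may assume $\int f=1$ (so $h(f)=\int\varphi f$) and, for cleanliness, that $f$ is isotropic. For $m\ge 2$ set
\[
K_m(f)=\bigl\{(x^{(1)},\dots,x^{(m)})\in(\R^n)^m:\ \varphi(x^{(1)})+\cdots+\varphi(x^{(m)})\le m\,h(f)\bigr\}\subset\R^{nm}.
\]
Convexity of $\varphi$ makes $K_m(f)$ convex, evenness of $\varphi$ makes it centrally symmetric, and coercivity of $\varphi$ together with $h(f)>\min\varphi$ makes it a bounded body with $0$ in its interior (in the degenerate case $h(f)=\min\varphi$, $f$ is a multiple of an indicator $\chi_{_{C}}$ and $K_m(f)=C^m$, for which everything below is trivial since $L_{C^m}=L_C=\widehat L_f$). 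The crux is the claim $L_{K_m(f)}\to\widehat L_f$ as $m\to\infty$; granting it, since $\R^{nm}=\R^{\,n+n(m-1)}$ with $n(m-1)\in\N$, the hypothesis gives $L_{K_m(f)}\le 1/\sqrt{12}$ for every $m$, and passing to the limit yields $\widehat L_f\le 1/\sqrt{12}$, i.e.\ Conjecture \ref{c:Functional_Slicing_even}~ii).

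To establish the claim I would treat volume and covariance separately. For the volume, take $X_1,X_2,\dots$ i.i.d.\ with density $f$; then $\E[\varphi(X_i)]=\int\varphi f=h(f)$, and the identity $\int_{K_m(f)}e^{-\sum_i\varphi(x^{(i)})}\,\dlat x=\mathbb P\bigl(\textstyle\sum_{i\le m}\varphi(X_i)\le m\,h(f)\bigr)$, together with the law of large numbers and the central limit theorem, lets one sandwich $\vol_{nm}(K_m(f))$ between $e^{m(h(f)-\delta)}$ and $e^{m\,h(f)}$ times that probability (with $\delta\to0$), so $\vol_{nm}(K_m(f))^{1/nm}\to e^{h(f)/n}$. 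For the covariance, the uniform measure on $K_m(f)$ is invariant under permuting the blocks, so $\cov(K_m(f))$ has all diagonal blocks equal to some $A_m$ and all off-diagonal blocks equal to some $B_m$, and $\det\cov(K_m(f))=\det(A_m-B_m)^{\,m-1}\det\!\bigl(A_m+(m-1)B_m\bigr)$. A saddle-point estimate identifies the one-block marginal of this measure — of density proportional to $\vol_{n(m-1)}\!\bigl(\{\sum_{i\ge2}\varphi(x^{(i)})\le m\,h(f)-\varphi(\cdot)\}\bigr)$ — with a density converging to $f$, and the two-block marginal with one converging to $f\otimes f$; hence $A_m\to\cov(f)$ and $B_m\to0$, so $A_m-B_m\to\cov(f)\succ0$ while $0\preceq A_m+(m-1)B_m\preceq m\,A_m$, whence $(\det\cov(K_m(f)))^{1/2nm}\to(\det\cov(f))^{1/2n}$. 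Combining, $L_{K_m(f)}=(\det\cov(K_m(f)))^{1/2nm}\big/\vol_{nm}(K_m(f))^{1/nm}\to(\det\cov(f))^{1/2n}e^{-h(f)/n}=\widehat L_f$.

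The conceptual point is the choice of the threshold $m\,h(f)$: it is exactly what forces the one-block marginal to converge to $f$ and the volume to scale like the ``entropic volume'' $(e^{h(f)}\int f)^{m}$, which is also why only the entropic constant $\widehat L_f$ — and not $L_f$ or $\widetilde L_f$ — arises as such a limit. The hard part will be the analytic housekeeping: upgrading pointwise convergence of the marginal densities to convergence of their covariances (available since marginals of log-concave measures are log-concave and hence have uniformly subexponential tails); checking that the error terms in the Laplace/central-limit asymptotics are subexponential in $m$, so they disappear after extracting the $nm$-th root; and, most delicately, a quantitative decorrelation of the blocks guaranteeing that $\det(A_m+(m-1)B_m)$ is bounded above polynomially and below subexponentially in $m$, which is expected from the log-concavity of the uniform measure on $K_m(f)$ (e.g.\ via a Poincaré-type estimate). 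The central-limit step also needs $V(f)>0$, which fails only for $f$ a multiple of an indicator — the case already disposed of above.
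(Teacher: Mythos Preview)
Your treatment of part~(ii) matches the paper's exactly. For part~(i), however, you take a genuinely different route. The paper builds, for each $m$, a body in $\R^{n+m}$ (not $\R^{nm}$): with $g_m(x)=(1-\varphi(x)/m)_+$ concave on its support, it sets
\[
K_m=\bigl\{(y,x)\in\R^m\times\R^n:\ \|y\|_{\B^m_\infty}\le g_m(x)\bigr\},
\]
computes $\cov(K_m)$ by Fubini as a block-diagonal matrix with blocks proportional to $\cov(\B^m_\infty)$ and $\cov(g_m^m)$, and obtains the closed formula
\[
L_{K_m}^{2(m+n)}=\left(\frac{\int g_m^{m+2}}{\int g_m^{m}}\right)^{m}\frac{\det\cov(g_m^m)}{\bigl(\int g_m^m\bigr)^{2}}\,L_{\B^m_\infty}^{2m}.
\]
The ratio $L_{K_m}^{2(m+n)}/L_{\B^m_\infty}^{2m}$ then tends to $\widehat L_f^{\,2n}$ by elementary calculus, since $(\int g_m^{m+2}/\int g_m^m)^m\to e^{-2h(f)}$ and $g_m^m\to f$; the hypothesis $L_{K_m}\le L_{\B^{m+n}_\infty}$ yields $\widehat L_f\le 1/\sqrt{12}$ at once.

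Your tensorisation $K_m(f)=\{\sum_i\varphi(x^{(i)})\le m\,h(f)\}\subset(\R^n)^m$ is conceptually appealing --- it makes transparent why the entropic threshold is the right one --- but, as you yourself flag, it trades the paper's explicit Fubini computation for genuine analytic work that you have not carried out. The step you call ``most delicate'' is a real gap: you need $(\det(A_m+(m-1)B_m))^{1/m}\to1$, equivalently two-sided subexponential control on $\cov\bigl(\sum_i Y_i\bigr)$ under the uniform law on $K_m(f)$, and neither log-concavity nor a generic Poincar\'e inequality supplies the required lower bound without further argument. Upgrading pointwise convergence of the one- and two-block marginals to convergence of their second moments likewise needs uniform integrability that you only assert. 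None of this is fatal, but it is substantially harder than the paper's construction, whose point is precisely that the $y$- and $x$-blocks are uncorrelated \emph{by design} (because $\int_C y\,\dlat y=0$), so no decorrelation estimate is ever needed.
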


The general cases are collected in the next proposition. 
\begin{proposition}\label{p:functional_implies_geometrical}
\noindent
    \begin{enumerate}
    

    \item Let $n\geq 1$. If for any $m \in \mathbb{N}$, the strong slicing conjecture (Conjecture \ref{GC:non_symmetric}) holds true for every convex body in $\R^{n+m}$, then the strong functional slicing conjecture holds also for every integrable log-concave function in $\R^n$ (Conjecture \ref{c:Functional_Slicing} iii)).
    \smallskip

    \item Let $n\geq 1$. If the strong functional slicing conjecture (Conjecture \ref{c:Functional_Slicing} iii)) holds true for every integrable log-concave function in $\R^{n+1}$, then the strong slicing conjecture holds also for 
    every  convex body in $\R^n$ (Conjecture \ref{GC:non_symmetric}).
\end{enumerate}
\end{proposition}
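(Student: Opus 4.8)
The plan is to exploit the standard dictionary between convex bodies and log-concave functions, in both directions, being careful to track how the three quantities entering $\widehat L$ — the integral, the entropy, and the covariance — transform under the correspondence. For part (1), the key construction is to lift a log-concave function $f=e^{-\varphi}$ on $\R^n$ to a convex body in higher dimensions. The most economical device is the ``revolution-type'' body associated with $f$: for a suitable one-dimensional profile, the set $K_f=\{(x,s_1,\dots,s_m)\in\R^n\times\R^m : x\in\mathrm{dom}\,\varphi,\ |(s_1,\dots,s_m)|\le g_m(\varphi(x))\}$ (with $g_m$ chosen so that the $m$-dimensional slice volume equals a power of $f(x)$) is convex when $m$ is large, and as $m\to\infty$ its isotropic constant and volume-related quantities converge, after the correct normalization, to those of $f$. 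I would first record the precise asymptotics: $\vol(K_f)\sim c_m\int f$, the barycenter of $K_f$ converges to $(\g(f),0)$, and — this is the crucial point — the block of $\cov(K_f)$ in the $x$-variables converges to $\cov(f)$ while the $\R^m$-block, after dividing by the appropriate constant, is governed by a second moment of $\varphi$ against $f$, which is exactly where the entropy $h(f)$ enters in the limit. Taking $L_{K_f}\le L_{\Delta^{n+m}}$ for all $m$ and passing to the limit should yield $\widehat L_f\le \widehat L_{f_0}=1/e$, using that $f_0$ is itself the limiting profile of simplices (this is the functional analogue of $\Delta^n\to f_0$ used throughout the Mahler literature, cf.\ \cite{FrMe2}).

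For part (2), the direction is reversed and much more direct: given a convex body $K\subset\R^n$ with, say, barycenter at the origin, I would associate to it the log-concave function on $\R^{n+1}$ of the form $f_K(x,t)=\mathds{1}_K(x)\,e^{-t}\mathds{1}_{[0,\infty)}(t)$, or rather a centered translate thereof, and then compute $\int f_K$, $h(f_K)$ and $\cov(f_K)$ explicitly. The integral is $\vol(K)$; the entropy splits as the entropy of the uniform measure on $K$ (which is $\log\vol(K)$) plus the entropy of the exponential in the last coordinate (a constant); and the covariance is block-diagonal with the $K$-block equal to $\cov(K)$ and the last entry a constant. Plugging these into the definition of $\widehat L_{f_K}$ shows that $\widehat L_{f_K}$ is, up to an explicit dimensional constant, equal to $L_K$ (the uniform-measure isotropic constant of $K$), and a direct check on $K=\Delta^n$ versus the conjectured extremizer $f_0$ in $\R^{n+1}$ confirms the constants match, so that $\widehat L_{f_K}\le\widehat L_{f_0}$ translates precisely into $L_K\le L_{\Delta^n}$, which is Conjecture \ref{GC:non_symmetric}.

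The main obstacle is in part (1): establishing the convexity of the lifted body $K_f$ for the chosen profile $g_m$, and then rigorously justifying the interchange of limits so that the $\R^m$-block of the covariance produces exactly $e^{-2h(f)/n}$ in the limit rather than some other functional of $\varphi$. The delicate point is that $\det\cov(K_f)^{1/2(n+m)}$ mixes the $n$ good directions with the $m$ auxiliary ones, and one must show that the auxiliary block, suitably normalized, contributes a factor converging to $\bigl(e^{-h(f)}/\int f\bigr)$-type quantity; this is precisely the reason — flagged already in Section \ref{s:discussion_slicing} — that the entropic isotropic constant $\widehat L$, and not $L_f$ or $\widetilde L_f$, is the one compatible with the geometric conjecture under this limiting procedure. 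I would handle it by first doing the computation for the model case $f=f_0$ (where everything is explicit, $K_{f_0}$ being affinely a simplex) to fix the normalizing constants $c_m$ and $g_m$, and then showing the general estimate by a concentration/dominated-convergence argument on the slices. Part (2), by contrast, should be essentially a bookkeeping exercise once the right function $f_K$ is written down.
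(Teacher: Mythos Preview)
Your overall architecture for part (i) is the same as the paper's, but the specific ``revolution-type'' body you write down uses a Euclidean ball in the auxiliary $\R^m$ directions, and that choice is fatal for the constants. In the notation of Lemma~\ref{l:iso_constant_of_revbodies}, your body is $K_m(B_2^m,f_m^{1/m})$; the lemma gives
\[
\frac{L_{K_m(C,f_m^{1/m})}^{2(m+n)}}{L_C^{2m}}\longrightarrow \widehat L_f^{\,2n}
\]
for \emph{any} centered $C$, so applying $L_{K_m}\le L_{\Delta^{m+n}}$ yields only $\widehat L_f^{\,2n}\le \lim_m L_{\Delta^{m+n}}^{2(m+n)}/L_{B_2^m}^{2m}$, and a Stirling computation shows this limit is $+\infty$ (since $L_{B_2^m}\ll L_{\Delta^m}$). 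You even note that $K_{f_0}$ should be ``affinely a simplex'', which is incompatible with your rotationally symmetric construction. The paper fixes this by taking $C=\Delta^m$: then $K_m(\Delta^m,(f_0)_m^{1/m})$ really is a simplex, the ratio $L_{\Delta^{m+n}}^{2(m+n)}/L_{\Delta^m}^{2m}$ telescopes, and its limit is exactly $e^{-2n}$.

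The gap in part (ii) is more serious. For your product function $f_K(x,t)=\chi_K(x)e^{-t}\chi_{[0,\infty)}(t)$ one computes $\int f_K=\vol(K)$, $h(f_K)=1$, and $\cov(f_K)=\cov(K)\oplus(1)$, hence
\[
\widehat L_{f_K}^{\,n+1}=\frac{e^{-1}}{\vol(K)}\sqrt{\det\cov(K)}=e^{-1}L_K^{\,n}.
\]
The hypothesis $\widehat L_{f_K}\le e^{-1}$ then gives only $L_K\le e^{-1}$, which is \emph{strictly weaker} than $L_K\le L_{\Delta^n}$ for every finite $n$ (already for $n=1$, $L_{\Delta^1}=1/(2\sqrt3)\approx0.289<e^{-1}$). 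So the constants do \emph{not} match: when $K=\Delta^n$, your $f_{\Delta^n}$ is not $f_0$ in $\R^{n+1}$, and $\widehat L_{f_{\Delta^n}}<e^{-1}$. The paper instead uses the cone-type function of Lemma~\ref{l:relationship_conjectures},
\[
f(x,s)=e^{-s}\chi_{\{\|x\|_K\le s+n+1\}}(x,s),
\]
whose level sets at height $s$ are dilates of $K$; for this function one finds $\widehat L_f = c_n\,L_K^{n/(n+1)}$ with $c_n=(n+2)^{n/(2(n+1))}\sqrt{n+1}\,e^{-1}(n!)^{-1/(n+1)}$, and now $c_n L_{\Delta^n}^{n/(n+1)}=e^{-1}$ exactly, so the implication goes through. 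The missing idea is that the lift must turn $\Delta^n$ into (an affine image of) the extremizer $f_0$, and a product with a fixed one-dimensional exponential cannot do that; the cone construction can, because a cone over a simplex is again a simplex.
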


\begin{remark}
    Note that Proposition \ref{p:functional_implies_geometrical} together with Theorem \ref{t:Cov_Matrix_Ineq} implies a weaker version of Corollary \ref{c:slicing_implies_Mahler}. Specifically, one can deduce that, if for every $n\geq 1$ and $m \in \N$ the strong slicing conjecture holds true for every convex body in $\R^{n+m}$, then Mahler's conjecture holds true for every convex body $K \in \R^n$.
\end{remark}



    

The main idea to prove Propositions \ref{p:geometrical_implies_functional} and \ref{p:functional_implies_geometrical} is to use a certain type of convex bodies which have already appeared in the literature (see e.g. \cite{AaKlMi} and \cite{FrMe2}). We recall their definition, together with some properties that we shall use, in the following lemma.
\begin{lemma}\label{l:iso_constant_of_revbodies}
    Let $C \subset \R^m$ be a centered convex body, and let $g:\R^n \to \R_{+}$ be an integrable function, which is concave on its support. Let $K_{m}(C,g) \subset \R^m \times \R^n$ be the convex body given by
    $$
    K_{m}(C,g) = \{ (y,x) \in \R^m \times \R^n: \|y\|_{C} \leq g(x) \}.
    $$
    Then we have 
    $$
    L_{K_{m}(C,g)}^{2(m+n)} = \frac{\left(\int_{\R^n} g(x)^{m+2} \,\dlat x\right)^m}{\left(\int_{\R^n} g(x)^m\,\dlat x\right)^{m+2}} \det\text{\emph{Cov}}(g^m) L_{C}^{2m}.
    $$
Moreover, if $f: \R^n \to \R_+$ is an integrable log-concave function, let $f_m : \R^n \to \R_+$ be the concave function given by $f_m (x) = \bigl(1 + \frac{1}{m}\log f(x)\bigr)_+$, and let $C_m \subset \R^m$ be a sequence of convex bodies. Then
    \begin{equation}\label{e:limit}
        \lim_{m \to \infty} \frac{L_{K_{m}(C_m,f_m)}^{2(m+n)}}{L_{C_m}^{2m}} = \widehat{L}_f^{2n}.
    \end{equation}
\end{lemma}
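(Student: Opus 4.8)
\textbf{Proof plan for Lemma \ref{l:iso_constant_of_revbodies}.}

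The plan is to compute the three basic geometric quantities of $K_m(C,g)$ — its volume, its barycenter, and its covariance matrix — by integrating over the fibers $\{y : \|y\|_C \le g(x)\}$, and then assemble them into $L_{K_m(C,g)}$. First I would observe that for fixed $x$ in the support of $g$, the fiber is $g(x)\cdot C$, a dilate of $C$, so $\vol_m(g(x)C) = g(x)^m \vol_m(C)$. Integrating in $x$ gives $\vol_{m+n}(K_m(C,g)) = \vol_m(C)\int_{\R^n} g(x)^m\,\dlat x$. Next, since $C$ is centered, the $\R^m$-component of the barycenter of $K_m(C,g)$ vanishes, and the $\R^n$-component is the barycenter of the density proportional to $g(x)^m$, i.e.\ $\g(g^m)$ in the notation of the paper. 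For the covariance matrix I would use the block structure coming from the splitting $\R^m \times \R^n$: the off-diagonal $\R^m\times\R^n$ block vanishes (again because $C$ is centered, the fiber-integral of $y$ is zero for each $x$), the $\R^n\times\R^n$ block is exactly $\cov(g^m)$ times a scalar factor coming from the fiber volumes, and the $\R^m\times\R^m$ block is $\bigl(\int g^{m+2}/\int g^m\bigr)\cdot \cov_{\text{unit}}(C)$ where $\cov_{\text{unit}}(C)$ is the covariance of the uniform measure on $C$. Concretely, $\int_{\R^n}\!\int_{\|y\|_C\le g(x)} y y^T\,\dlat y\,\dlat x = \Bigl(\int_{\R^n} g(x)^{m+2}\dlat x\Bigr)\int_C y y^T\,\dlat y$ by the dilation $y \mapsto g(x) y$. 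Taking the determinant of the block-diagonal covariance matrix and dividing by the appropriate power of the volume, and recalling $L_C^{2m} = \det\cov_{\text{unit}}(C)/\vol_m(C)^2$ (after normalizing by $\vol_m(C)$ inside the covariance), yields the stated closed form for $L_{K_m(C,g)}^{2(m+n)}$; I would be careful to track the normalizing factor $\vol_m(C)$ so that the $L_C^{2m}$ that appears is the genuine isotropic constant and not the unnormalized covariance determinant.

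For the limiting statement \eqref{e:limit}, the strategy is to apply the first part with $g = f_m = (1 + \tfrac1m\log f)_+$ and $C = C_m$, so that
\[
\frac{L_{K_m(C_m,f_m)}^{2(m+n)}}{L_{C_m}^{2m}} = \frac{\Bigl(\int_{\R^n} f_m^{\,m+2}\Bigr)^m}{\Bigl(\int_{\R^n} f_m^{\,m}\Bigr)^{m+2}}\,\det\cov(f_m^{\,m}),
\]
and then to let $m\to\infty$ using the pointwise limit $f_m(x)^m = \bigl(1 + \tfrac1m\log f(x)\bigr)_+^m \to f(x)$ together with $f_m(x)^{m+2} \to f(x)$ as well. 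The measures with densities $f_m^{\,m}$ converge (after normalization) to the log-concave probability measure with density $f/\int f$, so $\cov(f_m^{\,m}) \to \cov(f)$ and $\int f_m^{\,m} \to \int f$. The delicate point is the factor $\Bigl(\int f_m^{\,m+2}\Bigr)^m / \Bigl(\int f_m^{\,m}\Bigr)^{m+2}$: writing it as $\bigl(\int f_m^{\,m+2}/\int f_m^{\,m}\bigr)^m \cdot \bigl(\int f_m^{\,m}\bigr)^{-2}$, the second factor tends to $(\int f)^{-2}$, while the first is of the form $a_m^m$ with $a_m = \int f_m^{\,m+2}/\int f_m^{\,m} \to 1$, so one needs the sharper asymptotics $m(a_m - 1) \to -h(f) - \log\int f$ — equivalently, that $\int f_m^{\,m}\bigl(1 + \tfrac1m\log f\bigr)^2\big/\int f_m^{\,m}$ expanded to first order in $1/m$ produces the entropy term. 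This is exactly where the entropy $h(f) = -\int f\log f / \int f$ enters, and combining $a_m^m \to e^{-h(f)}/\bigl(\ldots\bigr)$ appropriately with the other factors gives $e^{-2h(f)}(\int f)^{-2}\det\cov(f)\cdot(\int f)^{\,?}$; bookkeeping the powers of $\int f$ and of $e^{-h(f)}$ yields precisely $\widehat{L}_f^{2n} = \bigl(e^{-h(f)}/\int f\bigr)^2 \det\cov(f)^{1/n}$ raised to the $n$, i.e.\ $\bigl(e^{-h(f)}/\int f\bigr)^{2}\det\cov(f)$ wait — $\widehat L_f^{2n} = (e^{-h(f)}/\int f)^{2}(\det\cov f)$, matching the display.

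I expect the main obstacle to be the limit in \eqref{e:limit}: the algebra of the first part is routine (it is the standard computation for "revolution-type" bodies), but extracting the factor $e^{-h(f)}$ requires a genuine second-order expansion of the integrals $\int f_m^{\,m}$ and $\int f_m^{\,m+2}$ in powers of $1/m$, and one must justify the interchange of limit and integral — for which the standard tool is that $f_m^{\,m} \uparrow f$ (or dominated convergence after noting $f_m^{\,m} \le e \cdot f$ on the support, say), together with uniform integrability of $|x|^2 f_m^{\,m}$ coming from log-concavity to pass $\cov(f_m^{\,m}) \to \cov(f)$. Once the pointwise and moment convergences are in hand, the only real content is matching the exponential factor, and I would organize that by taking logarithms: $\log\bigl(L_{K_m}^{2(m+n)}/L_{C_m}^{2m}\bigr) = m\log\!\frac{\int f_m^{m+2}}{\int f_m^{m}} - 2\log\!\int f_m^{m} + \log\det\cov(f_m^{m})$, and showing the first term tends to $-2h(f)$ — equivalently $m\bigl(\int f_m^{m+2} - \int f_m^{m}\bigr)/\int f_m^{m} \to -2h(f) + o(1)$ after the expansion — which pins down the entropy.
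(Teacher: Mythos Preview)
Your plan is correct and essentially identical to the paper's: compute the volume, barycenter, and covariance of $K_m(C,g)$ by fibering over $x$ (each fiber being $g(x)\,C$), obtain the block-diagonal covariance matrix, assemble $L_{K_m(C,g)}$, and then pass to the limit using $f_m^m\to f$ together with a first-order expansion of $\int f_m^{m+2}/\int f_m^m$ to extract the entropy --- the paper simply records ``straightforward computations show'' for this last step, while you spell it out. One correction to your bookkeeping: the right asymptotic is $m(a_m-1)\to -2h(f)$ (as you write at the very end), not $-h(f)-\log\int f$ as in the middle of your second paragraph, since $f_m^2-1=\tfrac{2}{m}\log f+O(1/m^2)$ gives $m\bigl(\int f_m^{m+2}-\int f_m^{m}\bigr)\to 2\int f\log f$.
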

\begin{remark}
         Note  that \eqref{e:limit}  provides another justification for involving the entropy on the definition of the functional isotropic constant.
    \end{remark}
\begin{proof}
    First, using Fubini's theorem we have 
    $$
    \vol(K_{m}(C,g)) = \int_{\R^n}\int_{g(x)C}\dlat y \,\dlat x = \vol_{m}(C) \int_{\R^n} g(x)^m \,\dlat x. 
    $$
    Following the same idea, it is easy to see that for every $i,j$ we have 
    \begin{equation*}
        \begin{split}
            &\int_{K_{m}(C,g)} y_i \,y_j \,\dlat x \, \dlat y = \int_{C} y_i\,y_j \,\dlat y \int_{\R^n} g(x)^{m+2}\,\dlat x, \\
            &\int_{K_{m}(C,g)} x_i \,x_j \,\dlat x \, \dlat y = \vol_{m}(C) \int_{\R^n} x_i \, x_j g(x)^{m}\,\dlat x,\\
            &\int_{K_{m}(C,g)} y_i \,x_j \,\dlat x \, \dlat y = \int_{C} y_i\,\dlat y \int_{\R^n} x_j g(x)^{m+1}\,\dlat x.
        \end{split}
    \end{equation*}
    Hence, we get $\g\bigl(K_{m}(C,g)\bigr) = \bigl(0,\g(g^m)\bigr)$. Moreover, 
    \begin{equation*}
    \cov\bigl(K_{m}(C,g)\bigr) = \begin{pmatrix}
    \begin{array}{c|c}
  \frac{\int_{\R^n} g(x)^{m+2} \,\dlat x}{\int_{\R^n} g(x)^m\,\dlat x} \cov(C) & 0 \hspace{0.5cm}\\
 \hline
  0 & \cov(g^m)\\
 \end{array}
\end{pmatrix},
\end{equation*}
which implies the first statement. 

Finally, straightforward computations show that 
$$
\lim_{m \to \infty}\frac{\left(\int_{\R^n} f_m(x)^{(m+2)/m} \,\dlat x\right)^m}{\left(\int_{\R^n} f_m(x)\,\dlat x\right)^{m+2}} = \frac{e^{-2h(f)}}{\left(\int f(x)\,\dlat x\right)^2}.
$$
Thus \eqref{e:limit} follows.
\end{proof}

We now use a log-concave function which has already appeared in the literature (see \cite[Proposition~2]{FrMe2}).
\begin{lemma}\label{l:relationship_conjectures}
    Let $K \subset \R^n$ be a centered convex body, and let $f : \R^n \times \R \to \R_{+}$ be the function given by
    $$
    f(x,s) = e^{-s}\chi_{_{\{\|x\|_K\le s+n+1\}}}(x,s).
    $$ 
    Then
    
    \begin{equation}
        \begin{split}
            &\widehat{L}_{f} = \frac{(n+2)^{\frac{n}{2(n+1)}}\sqrt{n+1}}{e(n!)^{\frac{1}{n+1}}}L^{\frac{n}{n+1}}_{K}.
        \end{split}
    \end{equation}
\end{lemma}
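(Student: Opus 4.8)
The plan is to apply the first formula of Lemma~\ref{l:iso_constant_of_revbodies} to the convex body $K_{n+1}(C,g)$ with the very specific choices $C = \Delta^n$ (a centered regular simplex in $\R^n$, so that $m = n+1$ is forced by the requirement $C \subset \R^m$ — wait, here $m$ should be chosen so that the ``fiber'' dimension matches; in fact the function $f$ in the statement lives on $\R^n \times \R$, so I would set $m = n$, work in $\R^n \times \R$, and take $g(s) = (1 + \tfrac{1}{n}\log(e^{-s}))_+ = (1 - \tfrac{s}{n})_+$ after the log-concave-to-concave passage). More directly: observe that $f(x,s) = e^{-s}\chi_{\{\|x\|_K \le s+n+1\}}$ can be rewritten, via the change of variables $s \mapsto s$, as a function whose superlevel sets are dilates of $K$; indeed $\{(x,s): f(x,s) \ge e^{-t}\}$ is, up to the $s$-translation, the set $\{\|x\|_K \le s+n+1,\ s \le t\}$. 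I would recognize the body $\{(x,s) \in \R^n\times\R : \|x\|_K \le s+n+1,\ s \le 0\} = \{(x,s): \|x\|_{K} \le s+n+1\} \cap \{s \le 0\}$ as (a translate of) a cone-type body $K_{n}(K, g)$ with $g(s) = (s+n+1)_+$ restricted appropriately, but cleaner still is to compute directly.

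First I would compute $\int f$, $\int f^\circ$ is not needed here; rather I need $\vol$, barycenter, and covariance of the region under the graph perspective. Concretely, write $\int_{\R^n\times\R} f(x,s)^{(n)} $-type integrals: since $f(x,s)^\alpha = e^{-\alpha s}\chi_{\{\|x\|_K \le s+n+1\}}$, Fubini gives $\int f^\alpha = \vol_n(K)\int_{-(n+1)}^\infty (s+n+1)^n e^{-\alpha s}\,ds = \vol_n(K)\, e^{\alpha(n+1)}\,\alpha^{-(n+1)}\,\Gamma(n+1)$. This makes $\int f = \vol_n(K)\,e^{n+1}\,n!$ and all moment integrals $\int x_i x_j f$, $\int s^k f$, $\int x_i s\, f$ expressible in closed form in terms of $\vol_n(K)$, $\cov(K)$, $\g(K) = 0$, and Gamma values. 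From these I would assemble $\det\cov(f)$ as a $(n+1)\times(n+1)$ block determinant: the $\R^n\times\R^n$ block is a scalar multiple of $\cov(K)$ (scalar $= \frac{\int f\cdot s\text{-weight}}{\int f}$-ratio from the $(s+n+1)^{n+2}$ versus $(s+n+1)^n$ integrals), the cross block vanishes because $\g(K)=0$ kills $\int x_i s\, f$, and the $\R\times\R$ block is $\mathrm{Var}$ of $s$ under the density $f/\!\int f$, i.e.\ the variance of a shifted Gamma-type law, which is an explicit rational number in $n$. Then $h(f) = -\int f\log f /\int f = \int s\, f/\int f$, again an explicit rational in $n$ (it equals $(n+1) - (n+1) = $ something like $n+1 - $ the mean of the Gamma$(n+1,1)$ law shifted by $-(n+1)$; I expect $h(f)$ to come out to a clean value, perhaps $-1$ or a small rational, and in any case $e^{-h(f)}$ will combine with $\int f$ to produce the $1/e$ factor and the powers of $n!$). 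Finally I substitute everything into $\widehat{L}_f = \big(e^{-h(f)}/\!\int f\big)^{1/n}(\det\cov(f))^{1/2n}$ — careful: the ambient dimension is $n+1$, so the exponents are $1/(n+1)$ and $1/2(n+1)$ — collect the powers of $L_K$ (which enters only through $(\det\cov(K))^{1/2(n+1)} = (L_K^{2n}\vol_n(K)^2)^{1/2(n+1)}$, giving the $L_K^{n/(n+1)}$), and reconcile the $\vol_n(K)$ powers and the Gamma/$n!$ and $(n+1)$, $(n+2)$ factors with the claimed right-hand side.

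The main obstacle, and the only real content, is the bookkeeping: correctly tracking the fractional exponents (everything is raised to $1/2(n+1)$ or $1/(n+1)$, not $1/2n$ or $1/n$, so the $L_K^{n/(n+1)}$ exponent and the constant's exponents $\frac{n}{2(n+1)}$ and $\frac{1}{n+1}$ must fall out consistently), making sure the $\vol_n(K)$ contributions from $\int f$, from $\det\cov(f)$ (via the $\cov(K)$ block and via the overall normalization), and from $L_K^{2n} = (\det\cov(K))/\vol_n(K)^2$ exactly cancel, and evaluating the handful of one-dimensional integrals $\int_{-(n+1)}^\infty (s+n+1)^k e^{-\alpha s}\,ds$ and the variance of the resulting law without sign or off-by-one errors. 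I would double-check the final constant by testing $n=1$: there $f(x,s) = e^{-s}\chi_{\{|x|\le (s+2)/?\}}$ with $K=[-a,a]$ centered, and the formula should reduce to $\widehat{L}_f = \frac{3^{1/4}\sqrt{2}}{e}L_K^{1/2}$, which I can verify against the explicit one-dimensional computations already recorded in the excerpt.
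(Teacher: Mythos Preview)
Your approach---direct computation via Fubini of $\int f$, the moment integrals $\int x_ix_jf$, $\int s^kf$, $\int x_isf$, assembly of the block-diagonal covariance (cross block vanishing because $\g(K)=0$), and $h(f)=\int sf/\int f$---is exactly what the paper does, after you rightly abandon the $K_m(C,g)$ detour. The one uncertainty you flag resolves cleanly: $\int sf=0$ (the $s$-marginal is a Gamma$(n+1,1)$ law shifted by $-(n+1)$, hence centered), so $h(f)=0$, the $(n+1)\times(n+1)$ covariance has blocks $(n+1)(n+2)\cov(K)$ and $n+1$, and the bookkeeping assembles to the stated constant.
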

\begin{proof}
    First, we have that
    \begin{equation*}
        \begin{split}
             \hspace{-0.85cm}\int_{\R^{n+1}} f(x,s) \,\dlat x\,\dlat s &= \int_{-(n+1)}^{+\infty} e^{-s} (s + n + 1)^n \vol(K) \, \dlat s\\
             &= n!\,e^{n+1}\vol(K),
        \end{split}
    \end{equation*}
    \begin{equation*}
        \begin{split}
            \hspace{0.45cm}\int_{\R^{n+1}} s f(x,s) \,\dlat x\,\dlat s &= \vol(K) \int_{-(n+1)}^{+\infty} s e^{-s} (s + n + 1)^n  \, \dlat s\\
            &= e^{n+1} \vol(K)\int_0^{+\infty} (t - n - 1) t^n e^{-t} \, \dlat t  = 0
        \end{split}
    \end{equation*}
and
    \begin{equation*}
        \begin{split}
            \hspace{0.3cm}\int_{\R^{n+1}} s^2 f(x,s) \,\dlat x\,\dlat s &= \vol(K) \int_{-(n+1)}^{+\infty} s^2 e^{-s} (s + n + 1)^{n} \;\dlat s\\
            &= \vol(K) e^{n+1} \int_0^{+\infty} (t - (n+1))^2 t^n e^{-t} \; \dlat t\\
            &= \vol(K) e^{n+1} (n+1)! 
        \end{split}
    \end{equation*}
Moreover, for every $i,j \in \{1,...,n\}$,
    \begin{equation*}
        \begin{split}
           \hspace{0.85cm}\int_{\R^{n+1}} x_i x_j f(x,s) \,\dlat x\,\dlat s &= \int_{-(n+1)}^{+\infty}  e^{-s} (s + n + 1)^{n+2}  \, \dlat s \int_K x_i x_j \,\dlat x\\
           &= (n+2)!\,e^{n+1} \int_K x_i x_j \,\dlat x,       
        \end{split}
    \end{equation*}
    \begin{equation*}
        \begin{split}
         \hspace{0.8cm}\int_{\R^{n+1}} x_i s f(x,s) \,\dlat x\,\dlat s  = \int_{-(n+1)}^{+\infty} s e^{-s} (s + n + 1)^n  \, \dlat s \int_K x_i  \,\dlat x = 0.
        \end{split}
    \end{equation*}
and
    \begin{equation*}
        \begin{split}
            \int_{\R^{n+1}} x_i f(x,s) \,\dlat x\,\dlat s &= \int_{-(n+1)}^{+\infty} e^{-s} (s + n + 1)^{n+1} \, \dlat s \int_K x_i \,\dlat x\\
            &= (n+1)! \, e^{n+1} \int_K x_i \,\dlat x=0 
        \end{split}
    \end{equation*}
Hence, 
\begin{equation*}
    \cov(f) = \begin{pmatrix}
    \begin{array}{c|c}
    (n+2)(n+1) \int_{K} \frac{x_i\, x_j}{\vol(K)}\,\dlat x& 0 \\
 \hline
   0 & (n+1) \\
 \end{array}
\end{pmatrix}.
\end{equation*}
Finally, it is not difficult to check that 
$$
h(f) = -\frac{\int f \log f}{\int f} = \frac{\int_{\R^{n+1}} s f(x,s)\,\dlat x \,\dlat s}{\int f}=0.
$$
Putting all together, we get
$$
\widehat{L}_{f} = \left(\frac{e^{-h(f)}}{\int_{\R^n}f(x) \,\dlat x}\right)^\frac{1}{n+1}\bigl(\det\cov(f)\bigr)^\frac{1}{2(n+1)}  = \frac{(n+2)^{\frac{n}{2(n+1)}}\sqrt{n+1}}{e(n!)^{\frac{1}{n+1}}}L^{\frac{n}{n+1}}_{K}.
$$

\end{proof}

\begin{proof}[Proofs of Propositions \ref{p:geometrical_implies_functional} and \ref{p:functional_implies_geometrical}] We start by proving i) in both propositions. Let $f : \R^n\to \R_{+}$ be an integrable log-concave function. Writing $f(x) = e^{-\varphi(x)}$, where $\varphi : \R^n \to \R$ is a convex function, we may see $f$ as a limit of $(1/m)$-concave functions, with $m >0$, since
    $$
    f(x) = \lim_{m \to +\infty} \left(1 - \varphi(x)/m \right)_+^m,
    $$
    and $f_{m}(x) = \bigl(1 - \varphi(x)/m\bigr)_+^m$ is a $(1/m)$-concave function. Moreover, if we assume that $f$ is even, then $f_m$ is even too. 
    
    Hence, setting $K_{m}(C,f_m^{1/m})$ as before, we shall consider two cases. If $f$ is even, taking $C = \B^m_{\infty}$, from Conjecture \ref{GC:symmetric} together with Lemma \ref{l:iso_constant_of_revbodies} we get that 
         $$
     L_{\B^{m+n}_{\infty}}^{2(m+n)} \geq L_{K_{m}(\B^m_{\infty},f_m^{1/m})}^{2(m+n)} =  \frac{\left(\int_{\R^n} f_m(x)^{(m+2)/m} \,\dlat x\right)^m}{\left(\int_{\R^n} f_m(x)\,\dlat x\right)^{m+2}} \det\cov(f_m) \, L_{\B^{m}_{\infty}}^{2m}.
     $$
     Therefore, for every $m \geq 0$ we get that 
     $$
      \frac{\left(\int_{\R^n} f_m(x)^{(m+2)/m} \,\dlat x\right)^m}{\left(\int_{\R^n} f_m(x)\,\dlat x\right)^{m+2}} \det\cov(f_m) \leq \frac{L_{\B^{m+n}_{\infty}}^{2(m+n)}}{L_{\B^{m}_{\infty}}^{2m}} = 12^{-n}.
     $$
    Now, we deduce by taking limits that 
    $$
     \widehat{L}_{f}^{2n} = \lim_{m \to +\infty}  \frac{\left(\int_{\R^n} f_m(x)^{(m+2)/m} \,\dlat x\right)^m}{\left(\int_{\R^n} f_m(x)\,\dlat x\right)^{m+2}} \det\cov(f_m) \leq 12^{-n}.
    $$
    from where the even case of Conjecture \ref{c:Functional_Slicing_even} iii) follows.
    \\
    
    For the general case, setting $C = \Delta^m$ we obtain that
    $$
    L_{\Delta^{m+n}}^{2(m+n)} \geq L_{K_{m}(\Delta^m,f_m^{1/m})}^{2(m+n)} =  \frac{\left(\int_{\R^n} f_m(x)^{(m+2)/m} \,\dlat x\right)^m}{\left(\int_{\R^n} f_m(x)\,\dlat x\right)^{m+2}} \det\cov(f_m) \, L_{\Delta^{m}}^{2m}.
    $$
    Thus, for every $m \geq 0$, we have that
    $$
    \frac{\left(\int_{\R^n} f_m(x)^{(m+2)/m} \,\dlat x\right)^m}{\left(\int_{\R^n} f_m(x)\,\dlat x\right)^{m+2}} \det\cov(f_m) \leq \frac{L_{\Delta^{m+n}}^{2(m+n)}}{L_{\Delta^{m}}^{2m}}.
    $$
    Finally, we obtain by taking limits that
    $$
    \widehat{L}_{f}^{2n} = \lim_{m \to +\infty}  \frac{\left(\int_{\R^n} f_m(x)^{(m+2)/m} \,\dlat x\right)^m}{\left(\int_{\R^n} f_m(x)\,\dlat x\right)^{m+2}} \det\cov(f_m) \leq \lim_{m \to +\infty} \frac{L_{\Delta^{m+n}}^{2(m+n)}}{L_{\Delta^{m}}^{2m}} = e^{-2n},
    $$
    which is the statement of Conjecture \ref{c:Functional_Slicing} iii).
    \\
    
Proposition \ref{p:geometrical_implies_functional} ii) can be immediately proved by taking the characteristic function of a centrally symmetric convex body. 

We end by proving ii) in Proposition \ref{p:functional_implies_geometrical}. To this regard, let $K \subset \R^n$ be a convex body and let $f:\R^n \times \R \to \R_{+}$ be the log-concave function given by 
    $$
     f(x,s) =e^{-s}\chi_{_{\{\|x\|_K\le s+n+1\}}}(x,s).
    $$
    Using the computations previously done in Lemma \ref{l:relationship_conjectures}, we get that
    $$
    \widehat{L}_{f} = \frac{(n+2)^{\frac{n}{2(n+1)}}\sqrt{n+1}}{e(n!)^{\frac{1}{n+1}}}L^{\frac{n}{n+1}}_{K}.
    $$
    Thus, since we are assuming that Conjecture \ref{c:Functional_Slicing} iii) holds true for every integrable log-concave function in $\R^{n+1}$, we have $\widehat{L}_{f}\le 1/e$ thus
    $$
    L_K \leq \frac{(n!)^{\frac{1}{n}}}{\sqrt{n + 2}\,(n + 1)^{\frac{n+1}{2n}}} = L_{\Delta^n}.
    $$
    \end{proof}

{\bf Acknowledgments:} This work was started while the second named author was visiting the LAMA at Université Gustave Eiffel. He would like to express his gratitude to all its researchers and staff members for their hospitality.

\end{document}